\definecolor{darkgreen}{rgb}{0,0.5,0}
\definecolor{darkred}{rgb}{0.7,0,0}
\theoremstyle{plain}
\newtheorem{lemma}{Lemma}[section]
\newtheorem{thm}[lemma]{Theorem}
\newtheorem{cor}[lemma]{Corollary}
\theoremstyle{definition}
\newtheorem{defn}[lemma]{Definition}
\newtheorem{rmk}[lemma]{Remark}
\numberwithin{equation}{section}
\newcommand{\m}{\ensuremath{{\cal M}}}
\newcommand{\cd}{\ensuremath{{\cal D}}}
\newcommand{\pl}[2]{{\frac{\partial #1}{\partial #2}}}
\newcommand{\pll}[2]{{\frac{\partial^2 #1}{\partial #2^2}}}
\newcommand{\de}{\delta}
\newcommand{\Om}{\Omega}
\newcommand{\ep}{\varepsilon}
\newcommand{\R}{\ensuremath{{\mathbb R}}}
\newcommand{\N}{\ensuremath{{\mathbb N}}}
\newcommand{\C}{\ensuremath{{\mathbb C}}}
\newcommand{\downto}{\downarrow}
\newcommand{\lap}{\Delta}
\newcommand{\grad}{\nabla}
\newcommand{\beq}{\begin{equation}}
\newcommand{\eeq}{\end{equation}}
\newcommand{\beqa}{\begin{equation}\begin{aligned}}
\newcommand{\eeqa}{\end{aligned}\end{equation}}
\newcommand{\brmk}{\begin{rmk}}
\newcommand{\ermk}{\end{rmk}}
\newcommand{\partref}[1]{\hbox{(\csname @roman\endcsname{\ref{#1}})}}
\newcommand{\Ric}{{\mathrm{Ric}}}
\newcommand*\dx{\ensuremath{\mathrm{d}x}}
\newcommand*\dy{\ensuremath{\mathrm{d}y}}
\newcommand*\dz{\ensuremath{\mathrm{d}z}}
\newcommand*\pddt{\ensuremath{\frac{\partial}{\partial t}}}
\newcommand*\ee{\ensuremath{\mathop{\mathrm{e}}\nolimits}}
\title{{\sc 
ricci flow of negatively curved incomplete surfaces
}}
\author{Gregor Giesen and Peter M. Topping}
\date{17 June 2009}
\begin{document}

\maketitle
\parskip=10pt

\begin{abstract}
We show uniqueness of Ricci flows starting at a surface
of uniformly negative curvature, with the assumption
that the flows become complete instantaneously.
Together with the more general existence result
proved in \cite{Top}, this settles the issue of
well-posedness in this class.
\end{abstract}

\section{Introduction}
\label{intro}

In 1982, Hamilton \cite{ham3D} introduced the study of 
Ricci flow, which evolves a Riemannian metric $g$ on a manifold 
\m\ under the nonlinear evolution equation
\beq
\label{RFeq}
\pl{}{t}g(t)=-2\,\Ric[g(t)].
\eeq
Hamilton proved that if \m\ is closed (i.e. compact and 
without boundary) then for any initial metric $g_0$,
there exist $T>0$ and a smooth Ricci flow $g(t)$ for 
$t\in [0,T]$, with $g(0)=g_0$. We also have uniqueness:
even if $T$ is reduced, there can be no other such flow.
(See also \cite{deturck}.)
Shi \cite{shi} and Chen-Zhu \cite{chenzhu} generalised
this to the case of noncompact \m\ in the case that 
the initial metric and all flows are assumed to be 
complete and with bounded curvature.

This theory left open the problem of starting a Ricci flow
in the more general situation that the initial metric is
incomplete. This possibility springs out when one 
contemplates, for example, restarting a Ricci flow after 
a finite-time singularity has occurred in the case that
\m\ has dimension at least $3$.

In \cite{Top}, the second author developed a very 
general existence theorem for Ricci flows which 
produces (as a special case) a Ricci flow starting
at any initial Riemannian surface of Gauss curvature
bounded above -- whether complete or not -- which 
distinguishes itself by being complete
at any strictly positive time. Evidence was given
in \cite{Top} to support the idea that this 
\emph{instantaneous completeness} should be the right
condition to guarantee uniqueness also.

In this paper, we show that this is the case under the
additional assumption that the upper bound for the Gauss
curvature is negative. We also demonstrate how the existence
issue is simpler in this case. 

\begin{thm}
  \label{thm:main}
Suppose $\m$ is any surface (i.e. a $2$-dimensional 
manifold without boundary)
equipped with a smooth Riemannian metric $g_0$
whose Gauss curvature satisfies $K[g_0]\leq -\eta<0$,
but which need not be complete.
Then there exists a {\bf unique} smooth Ricci flow $g(t)$ for
$t\in [0,\infty)$ with the following properties:
\begin{compactenum}[(i)]
\item
$g(0)=g_0$;
\item
$g(t)$ is complete for all $t>0$;
\item  
the curvature of $g(t)$ is bounded above for any compact
time interval within $[0,\infty)$;
\item
the curvature of $g(t)$ is bounded
below for any compact time interval within $(0,\infty)$.
\end{compactenum}
Moreover, this solution satisfies
$K[g(t)]\leq -\frac{\eta}{1+2\eta t}$ for $t\geq 0$
and
$-\frac{1}{2t}\leq K[g(t)]$ for $t>0$.
\end{thm}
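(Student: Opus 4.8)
The plan is to split the theorem into an existence part and a uniqueness part, handling both via the conformal reduction that is natural in dimension two.

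Let me think about this. In dimension 2, Ricci flow is $\partial_t g = -2\mathrm{Ric} = -Rg = -2Kg$ where $R = 2K$ is the scalar curvature. Writing $g(t) = e^{2u(t)}g_0$ conformally, with $K_0 = K[g_0]$ the Gauss curvature of $g_0$, the Gauss curvature of $g(t)$ is $K = e^{-2u}(K_0 - \Delta_0 u)$ where $\Delta_0$ is the Laplacian of $g_0$. The flow equation becomes
$$\partial_t u = e^{-2u}(\Delta_0 u - K_0) = -K,$$
a logarithmic-type fast/slow diffusion equation (the "logarithmic fast diffusion" PDE when $g_0$ is flat). This is a scalar PDE and the whole problem reduces to studying it.

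**Existence.** Given the upper bound $K_0 \le -\eta < 0$, the plan is to produce a barrier/subsolution. On a hyperbolic surface of constant curvature $-\eta$ (which is complete), the explicit solution to Ricci flow is $g_{\text{hyp}}(t) = (1+2\eta t)g_{\text{hyp}}$, with curvature $-\eta/(1+2\eta t)$. The idea is that the complete hyperbolic-type metric you would put on $\m$ (or rather its conformal factor relative to $g_0$) serves as a lower barrier for the conformal factor $u$, forcing the flow to be instantaneously complete, while $g_0$ itself — or the bound $K_0 \le -\eta$ — gives the upper curvature bound via the maximum principle applied to the evolution of $K$. Indeed $\partial_t K = \Delta K + 2K^2$ (the evolution equation for Gauss curvature under 2D Ricci flow, where $\Delta$ is the time-dependent Laplacian of $g(t)$), and comparing with the ODE $\dot\phi = 2\phi^2$, $\phi(0) = -\eta$, whose solution is $\phi(t) = -\eta/(1+2\eta t)$, one gets $K[g(t)] \le -\eta/(1+2\eta t)$ by a maximum principle — provided one can justify the maximum principle on a complete but possibly-unbounded-geometry surface, using the negativity to keep things under control. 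Similarly $K \ge -1/(2t)$ follows from the general fact (valid for instantaneously complete Ricci flows on surfaces, cf.\ \cite{Top}) by comparison with $\dot\phi = 2\phi^2$, $\phi \to -\infty$ as $t \downarrow 0$, giving $\phi(t) = -1/(2t)$. Alternatively, since \cite{Top} already provides existence of an instantaneously complete flow with curvature bounded above, the real content here is just the explicit two-sided curvature bounds, obtained by the two ODE comparisons above.

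**Uniqueness.** This is where I expect the main obstacle. Suppose $g_1(t), g_2(t)$ are two flows satisfying (i)--(iv). The strategy is to write both conformally with respect to a fixed background — say $g_i(t) = e^{2u_i(t)}g_0$ — so that $w := u_1 - u_2$ satisfies a linear-looking parabolic equation $\partial_t w = a(x,t)\Delta_0 w + (\text{lower order})$, where the coefficient $a = a(u_1,u_2)$ is positive but a priori only controlled away from $0$ and $\infty$ on compact space-time subsets of $(0,\infty)$, since both flows are complete only for $t > 0$. The key points to establish are: (a) both conformal factors blow up at a controlled rate as $t \downarrow 0$ and as one exits compact subsets of $\m$, so that the difference $w$ can be shown to vanish by an appropriate maximum principle on the noncompact manifold; (b) a Kato-type or energy argument, or a clever choice of test/barrier function exploiting the negativity of the curvature and the explicit lower barrier from the hyperbolic comparison, closes the estimate near $t = 0$ where completeness degenerates. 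The negativity $K \le -\eta$ should enter crucially: it gives the uniform lower bound $K \ge -1/(2t)$, hence two-sided control $-1/(2t) \le K[g_i(t)] \le -\eta/(1+2\eta t)$ on the curvature of both flows, which translates into uniform (on compact time subintervals of $(0,\infty)$) bounds on the conformal factors and their derivatives via parabolic regularity, enabling the comparison. The hard part, concretely, will be the behaviour as $t\downarrow 0$: one must show that any two instantaneously complete flows agree there, and the mechanism is that instantaneous completeness pins down the "maximal" conformal factor uniquely — any instantaneously complete solution must dominate (be $\ge$) every other, hence they coincide; establishing this maximality/comparison rigorously, presumably by approximating with flows on exhausting compact pieces and passing to the limit, is the technical core.

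<br>

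So in summary: reduce to the scalar PDE $\partial_t u = e^{-2u}(\Delta_0 u - K_0)$; get existence and the curvature bounds from \cite{Top} plus ODE comparison for $\partial_t K = \Delta K + 2K^2$; and prove uniqueness by a maximum-principle/comparison argument for the difference of conformal factors, with the negative curvature supplying the two-sided bounds $-1/(2t) \le K \le -\eta/(1+2\eta t)$ that make the noncompact maximum principle and the $t\downarrow 0$ limit work — the latter being the main obstacle.
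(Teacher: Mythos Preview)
Your outline correctly identifies the overall structure (reduce to a scalar PDE, existence from \cite{Top} plus ODE comparison for $\partial_t K=\Delta K+2K^2$, uniqueness as the main obstacle at $t\downarrow 0$), and the curvature bounds you state are the right ones. But the uniqueness sketch has a genuine gap.

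You write that ``any instantaneously complete solution must dominate (be $\ge$) every other, hence they coincide.'' Taken symmetrically this is circular: it is precisely the statement to be proved. What is actually true, and what the paper uses, is \emph{asymmetric}: the solution $G(t)$ constructed in \cite{Top} (as a decreasing limit of Shi flows on exhausting subdomains with complete initial data blowing up at the boundary) is \emph{maximal} among all Ricci flows with the same initial data, by a direct comparison principle on each subdomain. This gives $v\le u$ for free, where $u$ is the conformal factor of $G$ and $v$ any competitor. The hard direction is $u\le v$, and here your proposal offers no mechanism.

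The paper's mechanism has two ingredients you are missing. First, Yau's Schwarz lemma (applied to the complete metric $g(t)$ for $t>0$ against the hyperbolic metric on the disc) converts the two-sided \emph{curvature} bounds you found into two-sided \emph{pointwise} bounds on the conformal factor relative to the Poincar\'e metric; without this, the curvature bounds alone do not control $u-v$ near spatial infinity. Second, to handle $t\downarrow 0$ the paper introduces a time-shifted competitor
\[
\tilde v(t,x):=v\bigl(e^{-2C\delta}(t+\delta),x\bigr)+C\delta,
\]
which is still a Ricci flow but is now complete with bounded curvature \emph{at} $t=0$, and satisfies $u(0,\cdot)\le\tilde v(0,\cdot)$ by the upper curvature bound on $v$. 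One then applies a geometric comparison principle (requiring $g_2$ complete with bounded curvature, $K[g_1]$ bounded above, and $u-\tilde v$ bounded above---the last verified via the Schwarz-lemma bounds) to get $u\le\tilde v$, and sends $\delta\downarrow 0$. Your proposal does not contain this time-shift idea, nor the Schwarz-lemma step that makes the noncompact maximum principle applicable; the ``energy/Kato'' and ``approximation by compact pieces'' suggestions do not substitute for them.

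A smaller point: the paper does not work conformally relative to $g_0$ but first lifts to the universal cover, which (by the curvature hypothesis) is the disc, and works relative to the flat metric there. This is what makes the hyperbolic barrier and the boundary blow-up in the direct comparison principle available; uniqueness on the cover then descends.
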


Some discussion of what such flows look like can be found
in \cite{Top}. Generally, as $t\downto 0$, they blow
up in a manner reminiscent of reverse bubbling in the 
harmonic map heat flow (see \cite{revbub} and \cite{BDPVDH}).

The main difficulty in proving the new uniqueness part of
this result is that we do not assume directly any
control on the behaviour of any competing Ricci flow near 
spatial infinity. This control needs to be built up
by appealing to geometric results which can exploit
our completeness assumption, and combining the results
with a direct analysis of the conformal factor of the flows
more in the spirit of the
literature on the logarithmic fast-diffusion equation.
The main input from the previous literature comes from 
\cite{Top} and Yau's version of the Schwarz Lemma,
Theorem \ref{thm:yau} (see \cite{Yau73}).
We will also have to juggle two subtly different 
comparison principles: We prove a `geometric comparison
principle,' Theorem \ref{thm:geom-comp-principle} which 
compares two Ricci flows 
under the hypotheses that one of them is complete, and
certain curvature bounds are satisfied, and will also
repeatedly appeal to a standard `direct comparison principle,'
Theorem \ref{thm:direct-comp-principle} which compares 
certain Ricci flows without looking beyond the equation 
satisfied by their conformal factors, and in particular
without noticing their geometry.

\emph{Acknowledgements:} Both authors are partially supported
by The Leverhulme Trust.

\section{\emph{A priori} estimates on solutions} 
\label{section: apriori}

On a surface, the Ricci curvature of a metric $g$
takes the simple form $\Ric[g]=K[g]g$, where $K[g]$
represents the Gauss curvature. Therefore the Ricci flow
is the conformally invariant flow $\pl{g}{t}=-2K[g]g$
(which coincides with the Yamabe flow in this dimension).

If we choose a local complex coordinate $z=x+iy$ and
write the metric locally as $g=\ee^{2u}|\dz|^2$ 
(where $|\dz|^2=\dx^2+\dy^2$) then $K[g]=-\ee^{-2u}\lap u$
(where $\lap:=\pll{}{x}+\pll{}{y}$ is defined in terms
of the local coordinates) and we can write the Ricci flow as 
\begin{equation}
  \label{eq:ricci-flow-cf}
  \pl{u}{t}=\ee^{-2u}\lap u =-K[u],
\end{equation}
where we abuse notation here and in the sequel by
abbreviating $K[\ee^{2u}|\dz|^2]$ by $K[u]$.

The first observation to make about Theorem \ref{thm:main}
is that without loss of generality, we may assume that
$g_0$ is a conformal metric $\ee^{2u_0}|\dz|^2$ on the unit
disc $\cd\subset \C$. Indeed, we can lift $g_0$ to the universal
cover of $\m$, and since $g_0$ has uniformly negative 
Gauss curvature, the conformal type of this cover must
be $\cd$ (rather than $S^2$  or $\C$ which could be
ruled out using the Gauss-Bonnet Theorem or by applying
Corollary \ref{cor:yau2} below to large discs within $\C$,
respectively).
If we can establish both existence \emph{and} uniqueness
on the disc, then we can be sure to be able to quotient
the solution to give ultimately a unique solution on the original
surface.

Next we observe that by dilating $g_0$, and parabolically 
rescaling $g(t)$ (see \cite[\S 1.2.3]{RFnotes} for a discussion
of parabolic rescaling) we may assume that $\eta=1$ in 
Theorem \ref{thm:main}. 
These considerations motivate the following:

\begin{defn}
  \label{defn:admissible-soln}  
  A smooth Ricci flow
  $g(t)$ on $\cd$ for $t\in [0,T]$ 
  is called \textbf{admissible}, provided
  \begin{compactenum}[(i)]
  \item \label{cond:defn-admissible-soln-complete} 
    $g(t)$ is complete for $t>0$;
  \item \label{cond:defn-admissible-soln-upper-curv-bound}
    $K[g]\le C$ on $[0,T]\times\cd$;
  \item \label{cond:defn-admissible-soln-lower-curv-bound} 
    $K[g]\ge -C_\varepsilon$ on $[\varepsilon,T]\times\cd$ for all
    $\varepsilon\in(0,T]$.
  \end{compactenum}
\end{defn}

\begin{lemma}
  \label{lemma:apriori-estimates}
  Suppose $\ee^{2u_0}|\dz|^2$ is a smooth metric on the disc $\cd$ with 
  $K[u_0]\le -1$ and $\ee^{2w(t)}|\dz|^2$ is an
  admissible Ricci flow on $[0,T]\times\cd$ with initial condition $w(0)=u_0$.
  Then $w$ satisfies 
  \begin{compactenum}[(A)]
  \item $K[w] \ge -\frac1{2t}$,
  \item $w(t,x) \ge \ln\frac2{1-|x|^2} + \frac12\ln(2t)$,
  \end{compactenum}
  on $(0,T]\times\cd$, while on $[0,T]\times\cd$ we have
  \begin{compactenum}[(A)]
    \setcounter{enumi}{2}
  \item $w(t,x) \le \ln\frac2{1-|x|^2} +
    \frac12\ln\left(2t+1\right)$, 
  \item $w(t,x) \ge u_0(x) - Ct$.
  \end{compactenum}
\end{lemma}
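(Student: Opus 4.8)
My plan is to establish all four bounds by comparison principle arguments, exploiting the scalar PDE \eqref{eq:ricci-flow-cf} satisfied by the conformal factor together with the evolution equation for the curvature. The two lower bounds (A) and (B) will come from the completeness hypothesis and Yau's Schwarz Lemma; the two upper/lower bounds (C) and (D) will come from direct comparison with explicit barrier solutions.

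\textbf{Bounds (C) and (D): direct comparison with explicit barriers.} For (C), I would note that the hyperbolic metric of curvature $-1$ on $\cd$ is $\ee^{2h}|\dz|^2$ with $h(x)=\ln\frac2{1-|x|^2}$, and that $\phi(t,x):=h(x)+\frac12\ln(2t+1)$ is itself a Ricci flow: it is the hyperbolic metric rescaled so as to have curvature $-\frac1{2t+1}$, and one checks directly that $\pl{\phi}{t}=\frac1{2t+1}=-K[\phi]=\ee^{-2\phi}\lap\phi$. Since $K[u_0]\le -1$ means $\lap u_0 \ge \ee^{2u_0}$, while $\lap h = \ee^{2h}$, the maximum principle applied to $w-\phi$ (using convexity of $v\mapsto \ee^{-2v}$ in the right direction, i.e. the standard direct comparison principle, Theorem \ref{thm:direct-comp-principle}, with the caveat that $\phi$ blows up at $\dop\cd$ so it dominates $w$ there) gives $w\le\phi$ on $[0,T]\times\cd$; here the completeness of $w(t)$ is what forces $w$ to be dominated near the boundary. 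For (D), the function $u_0(x)-Ct$ is a subsolution: $\pl{}{t}(u_0-Ct)=-C\le -K[u_0]\le\ee^{-2u_0}\lap u_0$ wherever $K[w]\le C$, which holds by admissibility, and again direct comparison yields $w\ge u_0-Ct$.

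\textbf{Bound (A): the curvature lower bound via the evolution of $K$.} Writing $K=K[w]$, one has the standard evolution equation $\pl Kt=\lap_{g}K+2K^2$ on a surface. The function $\underline K(t):=-\frac1{2t}$ solves the ODE $\frac{d}{dt}\underline K=2\underline K^2$, so $K-\underline K$ satisfies a differential inequality with zero initial data "at $t=0^+$" in the sense that $\underline K\to-\infty$; the Omori--Yau / maximum principle on the complete manifolds $(\cd,g(t))$ for $t>0$ (admissibility gives a lower curvature bound on $[\varepsilon,T]$, which is what makes the maximum principle applicable) then yields $K\ge\underline K$. The main subtlety here — and what I expect to be the principal obstacle — is justifying the maximum principle on the noncompact complete surface $g(t)$ down to $t\to 0$: one must handle the fact that $K$ may be unbounded below as $t\downto0$, and argue that the barrier $-\frac1{2t}$ still wins, perhaps by working on $[\varepsilon,T]$ and letting $\varepsilon\downto0$ while using condition (iii) of admissibility to control things and the fact that $-\frac1{2\varepsilon}\to-\infty$ faster than any fixed lower bound.

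\textbf{Bound (B): lower bound on $w$ from (A) and completeness.} Once (A) holds, the metric $g(t)=\ee^{2w(t)}|\dz|^2$ for fixed $t>0$ is complete with Gauss curvature $\ge -\frac1{2t}$, i.e. bounded below. Applying Yau's Schwarz Lemma (Theorem \ref{thm:yau}, via its corollary) to the identity map from $(\cd,g(t))$ to $(\cd, \frac1{2t}\cdot(\text{hyperbolic metric}))$ — the latter having constant curvature $-\frac1{2t}$ and being, of course, complete — gives that $g(t)$ dominates the target, i.e. $\ee^{2w(t,x)}\ge 2t\cdot\frac{4}{(1-|x|^2)^2}$, which on taking logarithms is exactly $w(t,x)\ge\ln\frac2{1-|x|^2}+\frac12\ln(2t)$. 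I would present (A) first, then deduce (B) from it; the order (A)$\to$(B) and the independent barrier arguments for (C), (D) make the logical structure clean, and the only place real care is needed is the maximum-principle justification in (A).
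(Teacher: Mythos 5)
Your treatment of (A) and (B) matches the paper's: (A) via the evolution equation $\partial_t K = \Delta K + 2K^2$ on $[\ep,T]$ using admissibility conditions (ii)--(iii) to justify the maximum principle, compared against the ODE solution, then $\ep\downto 0$; and (B) by applying Corollary~\ref{cor:yau1} (Yau's Schwarz Lemma) to the complete metric $g(t)$ whose curvature is bounded below by $-\tfrac1{2t}$ from (A). These are fine.

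Your argument for (C) has a genuine gap. You propose applying Theorem~\ref{thm:direct-comp-principle} on $\Omega=\cd$ with $u=w$ and $v=\phi$, but the theorem requires the \emph{dominated} function $u$ to lie in $C([0,T]\times\bar\Omega)$, i.e.\ to extend continuously to the closed disc. For $t>0$ the metric $\ee^{2w(t)}|\dz|^2$ is complete, which forces $w(t,x)\to\infty$ as $|x|\to 1$; so $w\notin C([0,T]\times\bar\cd)$ and Theorem~\ref{thm:direct-comp-principle} cannot be invoked on $\cd$. Your remark that ``completeness of $w(t)$ is what forces $w$ to be dominated near the boundary'' is in fact backwards: completeness makes $w$ blow up at $\partial\cd$, which is precisely the obstruction — a priori you do not know which of $w$ or $\phi$ wins near the boundary, and without controlling that the comparison principle fails. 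The paper's fix is to work on the exhausting discs $\cd_{1-\delta}$: there $w|_{\overline{\cd_{1-\delta}}}$ is continuous on the closed sub-disc, the barrier is the \emph{hyperbolic metric of $\cd_{1-\delta}$} evolved by Ricci flow (so it blows up at $\partial\cd_{1-\delta}$ where $w$ is bounded), the initial inequality $w(0,\cdot)\leq h_\delta(0,\cdot)$ is supplied by Corollary~\ref{cor:yau2}, and one lets $\delta\downto 0$ at the end.

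For (D) you introduce a PDE comparison argument that is unnecessary and, as stated, faces the same boundary-regularity obstacle (again $w$ cannot play the role of a function continuous on $[0,T]\times\bar\cd$, nor does $w$ blow up at $\partial\cd$ at $t=0$ to play the role of $v$). The paper's proof is a one-line pointwise ODE integration: admissibility gives $K[w]\le C$, hence $\partial_t w = -K[w]\ge -C$, and integrating in $t$ at each fixed $x$ yields $w(t,x)\ge u_0(x)-Ct$. No comparison principle is needed, and you should replace your argument with this.
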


The proof relies on the following special case of the 
Schwarz lemma of S.-T. Yau. For 
convenience we give a proof in Appendix \ref{chap:thm-yau}.

\begin{thm}[Schwarz-Pick-Ahlfors-\!Yau {\cite{Yau73}}]
  \label{thm:yau}
  Let $\left(\m_1,g_1\right)$ and $\left(\m_2,g_2\right)$ be two Riemannian
  surfaces without boundary. If
  \begin{compactenum}[(a)]
  \item $\left(\m_1,g_1\right)$ is complete,
  \item $K[g_1]\ge -a_1$ for some number $a_1\ge0$, and
  \item $K[g_2]\le -a_2<0$,
  \end{compactenum}
  then any conformal map $f:\m_1\to\m_2$ satisfies
  \[ f^*(g_2)\le\frac{a_1}{a_2} g_1. \]
\end{thm}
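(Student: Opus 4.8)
The plan is to reduce the inequality to a differential inequality for the conformal factor of $f^*(g_2)$ with respect to $g_1$, and then run a maximum-principle argument tailored to the (possibly non-compact) complete surface $\m_1$. Write $f^*(g_2)=u\,g_1$ for a smooth function $u\geq0$ on $\m_1$; since $f$ is conformal, $u$ is strictly positive away from the isolated critical points of $f$, and on that set $f$ restricts to a local isometry of $(\m_1,f^*g_2)$ onto $(\m_2,g_2)$, so Gauss curvature pulls back: $K[f^*g_2]=K[g_2]\circ f$. Combining this with the conformal change formula $K[e^{2\phi}g_1]=e^{-2\phi}\big(K[g_1]-\Delta_{g_1}\phi\big)$ applied with $\phi=\frac12\log u$ gives, on $\{u>0\}$,
\[ \Delta_{g_1}\log u = 2K[g_1]-2u\,(K[g_2]\circ f) \ \geq\ -2a_1+2a_2 u, \]
using hypotheses (b), (c) and $u>0$. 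Expanding $\Delta_{g_1}\log u=u^{-1}\Delta_{g_1}u-u^{-2}|\nabla u|^2$ and discarding the nonnegative gradient term yields $\Delta_{g_1}u\geq 2a_2u^2-2a_1u$, which in fact holds on all of $\m_1$ (at the isolated zeros of $u$ both sides vanish, or $\Delta_{g_1}u\geq0$ holds there as at a minimum). The goal is then to show $u\leq a_1/a_2$ on $\m_1$ --- which, when $a_1=0$, is the assertion that $f$ is constant.

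If $\m_1$ is compact this is immediate: either $u\equiv0$, or $\log u$ attains an interior maximum, where $\Delta_{g_1}\log u\leq0$ forces $a_2u\leq a_1$ there and hence everywhere. For the substantive non-compact case I would localise. Fix $p\in\m_1$; completeness together with the lower Gauss-curvature bound (b) (equivalently $\Ric[g_1]\geq-a_1g_1$) makes the Laplacian comparison theorem available, so one can build a standard Cheng--Yau cutoff $\chi=\psi\big(d_{g_1}(p,\cdot)\big)$ with $\chi\equiv1$ on $B_R(p)$, vanishing outside $B_{2R}(p)$, and with $|\nabla\chi|^2/\chi\leq CR^{-2}$ and $\Delta_{g_1}\chi\geq-C(1+\sqrt{a_1})R^{-1}$ (the last in the barrier sense, which suffices). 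Applying the maximum principle to $\Phi:=\chi u$ on the compact set $\overline{B_{2R}(p)}$ at an interior maximum point $q$: from $\nabla\Phi(q)=0$, $\Delta_{g_1}\Phi(q)\leq0$ and $\Delta_{g_1}u\geq2a_2u^2-2a_1u$, dividing through by $u(q)$ (the case $u(q)=0$ being trivial) gives
\[ 2a_2\,\chi(q)u(q)\ \leq\ -\Delta_{g_1}\chi(q)+\frac{2|\nabla\chi(q)|^2}{\chi(q)}+2a_1\chi(q)\ \leq\ 2a_1+\frac{C'(1+\sqrt{a_1})}{R}, \]
with Calabi's trick handling the non-smoothness of the distance function on the cut locus. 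Since $\chi\equiv1$ on $B_R(p)$ and $\Phi(q)=\max_{\overline{B_{2R}(p)}}\Phi$, this forces $\max_{B_R(p)}u\leq a_1/a_2+C'(1+\sqrt{a_1})/(2a_2R)$; letting $R\to\infty$ and using that geodesic balls exhaust the complete manifold $\m_1$ gives $\sup_{\m_1}u\leq a_1/a_2$, i.e. $f^*(g_2)\leq (a_1/a_2)\,g_1$.

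I expect the main obstacle to be precisely the non-compactness of $\m_1$: there is no a priori bound on $u$, so a naïve global maximum is unavailable, and hypothesis (a) has to enter the argument through the Laplacian comparison estimate underpinning the cutoff (and through Calabi's trick at the cut locus). This is where completeness and the lower curvature bound (b) are genuinely used, beyond the use of (c) to produce the favourable sign $+2a_2u$ in the differential inequality; all three hypotheses are needed. A minor technical point is the behaviour at the critical points of $f$, where $u$ vanishes; since these are isolated and $u$ remains smooth across them, the inequality $\Delta_{g_1}u\geq2a_2u^2-2a_1u$ persists globally and the argument is unaffected. (One could instead invoke the Omori--Yau maximum principle directly, but that rests on the same completeness-plus-curvature input and would require first knowing $u$ is bounded, or applying it to a bounded function of $u$; the self-contained cutoff argument seems preferable.)
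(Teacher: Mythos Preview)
Your argument is correct and complete; the differential inequality $\Delta_{g_1}u\geq 2a_2u^2-2a_1u$ is derived just as in the paper, and your Cheng--Yau cutoff localisation with Calabi's trick is a standard and valid way to extract the global bound $u\leq a_1/a_2$ from it.

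The paper's own proof takes the alternative you mention in your last parenthetical remark: rather than localising with a cutoff, it applies the Omori generalised maximum principle (Theorem~\ref{thm:omori}) to a bounded transform of the conformal factor. Writing $f^*(g_2)=e^{2w}g_1$ (so $u=e^{2w}$), the paper sets $\tilde w:=-e^{-w}$, which is bounded above regardless of whether $w$ is; Omori then produces an ``almost-maximum'' point $q$ at which $\Delta_{g_1}\tilde w(q)<\varepsilon$ and $|\nabla\tilde w(q)|_{g_1}<\varepsilon$, and a short computation of $\Delta_{g_1}\tilde w$ in terms of $K[g_1]$, $K[g_2]$ forces a contradiction if $w(p)>0$ anywhere. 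Both routes use exactly the same geometric input (completeness of $g_1$ and the lower curvature bound) and yield the same conclusion; your cutoff approach is more explicitly quantitative and self-contained, while the Omori--Yau route is shorter once that maximum principle is taken as a black box and sidesteps any need to track constants or handle the cut locus by hand.
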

Setting $\m_1=\m_2$ to be a disc, $a_1=a_2=C>0$, 
$f=\mathrm{id}$ and either $g_1$ or $g_2$ to be $H$,
the complete metric of constant curvature $-C$, one obtains 
barriers for metrics of uniformly negative curvature.
The most significant consequence is the following lower bound.
\begin{cor}
  \label{cor:yau1}
  Let $g$ be a \emph{complete} conformal 
  Riemannian metric on a disc, whose Gauss
  curvature is bounded from below by a constant $-C<0$. If $H$ is 
  the complete conformal metric of constant curvature $-C$ on
  that disc, then 
  \[ H \le g. \]
\end{cor}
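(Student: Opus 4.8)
The plan is to obtain this bound as an immediate application of Theorem \ref{thm:yau}, with a judicious choice of the two surfaces and the conformal map. I would take both $\m_1$ and $\m_2$ to be the disc $\cd$, carrying respectively the metrics $g_1 := g$ and $g_2 := H$, and let $f := \id$, which is manifestly conformal.

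Next I would check the three hypotheses of Theorem \ref{thm:yau} for this choice. Hypothesis (a): $(\m_1,g_1) = (\cd, g)$ is complete --- this is precisely the standing assumption on $g$ in the corollary, and it is essential that it is $g$, not $H$, that occupies the slot requiring completeness. Hypothesis (b): $K[g_1] = K[g] \ge -C$, so it holds with $a_1 = C$. Hypothesis (c): $K[g_2] = K[H] \equiv -C < 0$, so it holds with $a_2 = C$; here one uses that $H$ has \emph{strictly} negative (constant) curvature, so that $a_2 > 0$ and the theorem is applicable. Theorem \ref{thm:yau} then gives $f^*(g_2) \le \frac{a_1}{a_2}\,g_1$, which, since $f = \id$ and $a_1 = a_2 = C$, is exactly $H \le g$.

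This is a one-line deduction once the bookkeeping is in place, so there is no genuine obstacle. The only point needing a moment's care is to arrange the application so that the completeness hypothesis falls on $g$ while the strict-negativity hypothesis falls on $H$ --- which is possible precisely because the two curvature lower/upper bounds both equal $-C$, forcing the factor $a_1/a_2$ in Yau's estimate to equal $1$.
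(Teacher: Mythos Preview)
Your proposal is correct and matches the paper's own argument essentially line for line: the paper also derives Corollary~\ref{cor:yau1} by applying Theorem~\ref{thm:yau} with $\m_1=\m_2$ the disc, $f=\id$, $a_1=a_2=C$, and $g_2=H$.
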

A further consequence (which also follows by a more elementary
comparison argument) is an upper bound for (possibly incomplete)
negatively curved surfaces:
\begin{cor}
  \label{cor:yau2}
  Let $g$ be a conformal Riemannian metric on a disc, 
  whose curvature is bounded from above by a constant $-C<0$. 
  If $H$ is the complete conformal metric of 
  constant curvature $-C$ on that disc, then 
  \[ g \le H. \]
\end{cor}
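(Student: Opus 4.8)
The plan is to read off this bound directly from Yau's Schwarz lemma, Theorem~\ref{thm:yau}, used in the reverse direction to Corollary~\ref{cor:yau1}. I would apply the theorem with $\m_1=(\cd,H)$, $\m_2=(\cd,g)$, $a_1=a_2=C$ and $f=\id$. Hypothesis~(a) holds because $H$ is, by assumption, the \emph{complete} metric of constant curvature $-C$; hypothesis~(b) holds because $K[H]=-C\ge-C$; and hypothesis~(c) holds because $K[g]\le-C<0$. The conclusion then reads $\id^*(g)=g\le\frac{a_1}{a_2}H=H$, which is precisely the assertion. Note that, unlike in Corollary~\ref{cor:yau1}, no completeness of $g$ is required here: the completeness hypothesis of Theorem~\ref{thm:yau} is now carried by $H$, which plays the role of the domain metric.

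Alternatively --- and this is presumably the ``more elementary comparison argument'' alluded to in the text --- one can avoid the full strength of Yau's theorem and argue by the maximum principle on conformal factors. Writing $g=\ee^{2u}|\dz|^2$ and $H=\ee^{2v}|\dz|^2$, the curvature hypotheses become $\lap u\ge C\ee^{2u}$ and $\lap v=C\ee^{2v}$ on $\cd$. For each $r\in(0,1)$ let $v_r$ be the conformal factor of the complete constant-curvature-$(-C)$ metric on the concentric disc $\cd_r$ of radius $r$ (an explicit rescaling of the Poincar\'e metric), so that $v_r\to+\infty$ as one approaches $\partial\cd_r$, whereas $u$ stays bounded on $\overline{\cd_r}$ since it is smooth on the strictly larger domain $\cd$. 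Hence $u-v_r\to-\infty$ near $\partial\cd_r$; if $u-v_r$ were positive somewhere it would attain an interior maximum at some $p\in\cd_r$, where $\lap(u-v_r)(p)\le0$ and therefore $C\ee^{2u(p)}\le\lap u(p)\le\lap v_r(p)=C\ee^{2v_r(p)}$, forcing $u(p)\le v_r(p)$, a contradiction. So $u\le v_r$ on $\cd_r$ for every $r$, and letting $r\upto1$ --- using $v_r\downto v$ pointwise, which is immediate from the explicit formulae --- gives $u\le v$, i.e. $g\le H$, on all of $\cd$.

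The only genuinely delicate point is the boundary behaviour in this second argument: one must ensure that $u-v_r$ really does tend to $-\infty$ at $\partial\cd_r$, so that any positive supremum is attained in the interior and the maximum principle applies. This is exactly why the exhaustion by slightly smaller discs is used --- on the disc $\cd$ itself the conformal factor $u$ of the given (merely smooth, possibly incomplete) metric need not have any controlled behaviour at $\partial\cd$, but on each compact $\overline{\cd_r}\subset\cd$ it is automatically bounded while $v_r$ blows up by construction. Everything else is a routine maximum-principle computation and a monotone limit; and if one simply quotes Theorem~\ref{thm:yau}, even this issue evaporates.
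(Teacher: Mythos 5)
Your first argument is exactly what the paper intends: the remark after Theorem~\ref{thm:yau} explicitly directs the reader to take $\m_1=\m_2=\cd$, $a_1=a_2=C$, $f=\id$, and to put $H$ in the role of the domain metric $g_1$, which is precisely your instantiation. Your second argument correctly fills in the ``more elementary comparison argument'' that the paper alludes to parenthetically but does not write out, and the key technical point --- exhausting by $\cd_r$ so that $u-v_r\to-\infty$ at $\partial\cd_r$ while $u$ stays bounded on $\overline{\cd_r}$, allowing the interior maximum principle to bite, followed by the monotone limit $v_r\downto v$ --- is handled properly. Both routes are sound.
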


\begin{proof}[Proof of Lemma \ref{lemma:apriori-estimates}]
  The Gauss curvature obeys the equation
  \[ \frac{\partial K}{\partial t} = \Delta K + 2K^2,\]
  under Ricci flow (see for example \cite[Proposition 2.5.4]{RFnotes})
  and one may apply the comparison principle (for example
  \cite[Theorem 12.14]{Cho08}) if the flow is complete and its
  curvature is bounded.
  For $\varepsilon>0$ the conditions
  (\ref{cond:defn-admissible-soln-upper-curv-bound}) and
  (\ref{cond:defn-admissible-soln-lower-curv-bound}) of Definition 
  \ref{defn:admissible-soln} give such a
  uniform bound on $K[w(t)]$ restricted to the time interval
  $t\in[\varepsilon,T]$. Comparing to the solution of the ODE $\pddt k=2k^2$
  with the lower curvature bound $-C_\varepsilon$ as initial condition at time
  $t=\varepsilon$, yields 
  \[ K\left[w|_{[\varepsilon,T]}\right] \ge - \frac1{2(t-\varepsilon) +
    C_\varepsilon^{-1}} \ge -\frac1{2(t-\varepsilon)}. \]
  Letting $\varepsilon\to0$, one obtains (A).

  For any time $t>0$, the metric $\ee^{2w(t)}|\dz|^2$ is complete
  (\ref{cond:defn-admissible-soln-complete}) and has 
  curvature bounded from below (A). Using Corollary \ref{cor:yau1} one
  obtains directly (B), since the conformal metric of constant curvature
  $-\frac1{2t}$ on the disc is
  \[ 2t\left(\frac{2}{1-|x|^2}\right)^2|\dz|^2. \]

  For small $\delta>0$, consider $w|_{\overline{\cd_{1-\delta}}}$ and write the
  conformal factor of the Ricci flow on the disc of radius 
  $1-\delta$ with Gauss curvature initially $-1$ as
  \[ h_\delta(t,x) := \ln\frac{2(1-\delta)}{(1-\delta)^2-|x|^2} +\frac12\ln(2t+1). \]
  By Corollary \ref{cor:yau2} we have $w|_{\cd_{1-\delta}}(0,\cdot)\le
  h_\delta(0,\cdot)$. Furthermore $w|_{\overline{\cd_{1-\delta}}}$ and
  $h_\delta$ fulfil the requirements for the direct
  comparison principle (Theorem \ref{thm:direct-comp-principle}), thus 
  $w|_{\cd_{1-\delta}} \le h_\delta$ holds throughout $[0,T]\times\cd_{1-\delta}$. 
  Since $h_\delta$ is continuous in $\delta$, letting $\delta\to0$ yields (C).

  Finally, the upper curvature bound
  (\ref{cond:defn-admissible-soln-upper-curv-bound}) and the 
  evolution equation of $w$ gives
  \[ \pddt w(t,x) = -K[w(t,x)] \ge -C, \]
  which integrates to give (D).
\end{proof}

\section{Existence}

The existence of the solution given by Theorem \ref{thm:main}
is a special case of the more general existence theory
developed in \cite{Top}.
However, the proof can be streamlined in the case 
that $(\m,g_0)$ is conformally hyperbolic
(by which we mean that it can be made hyperbolic by a conformal change
of metric) and in this section we sketch this simplified
proof in the particular case that $(\m,g_0)$ has $K[g_0]\leq -1$
and is conformally the disc $\cd$. (In this paper we can 
reduce to this case by virtue of our uniqueness result 
as described in Section \ref{section: apriori}.)

\begin{thm}[{Existence, special case of \cite[Theorem 1.1]{Top}}]
  \label{thm:existence}
  Let $g_0$ be a smooth conformal metric on $\cd$
  (possibly incomplete) with $K[g_0]\leq -1$.
  Then there exists a smooth Ricci flow $G(t)$ on $\cd$,
  for $t\in[0,\infty)$ with $G(0)=g_0$, such that $G(t)$ is complete 
  for every $t>0$.
  The Gauss curvature of this \emph{instantaneously complete 
  solution} satisfies
  \[ -\frac1{2t} < K\bigl[G(t)\bigr] \le 
    -\frac1{2t+1}
    \qquad\text{for $t>0$.} \]
    Moreover, $G(t)$ is maximal in the sense that if
    $g(t)$ for $t\in[0,\ep]\subset [0,\infty)$ is another Ricci flow 
    with  $g(0)=g_0$, then 
    $$g(t) \leq G(t)$$
    for all $t\in[0,\ep]$.
\end{thm}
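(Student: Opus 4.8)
The plan is to construct $G(t)$ as the increasing limit of Ricci flows on an exhausting sequence of compact approximating regions, and then to read off the stated curvature bounds and maximality from the comparison principles already at our disposal. First I would fix an exhaustion $\cd_{1-1/j}\subset\cd$ by closed subdiscs and, on each $\overline{\cd_{1-1/j}}$, solve the initial--boundary-value problem for the conformal Ricci flow \eqref{eq:ricci-flow-cf} with initial data $u_0$ and with a boundary condition chosen so that the flow becomes complete instantaneously on the open subdisc --- concretely, by imposing on $\dop\cd_{1-1/j}$ the (time-dependent, blowing up) values of the model hyperbolic-type barrier $h_{1/j}$ from the proof of Lemma \ref{lemma:apriori-estimates}, or equivalently by taking Dirichlet data $+\infty$. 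This yields a smooth flow $g_j(t)=\ee^{2w_j(t)}|\dz|^2$ on $\cd_{1-1/j}$, complete for each $t>0$, with initial metric $g_0|_{\cd_{1-1/j}}$; existence and smoothness here are standard parabolic theory for the scalar equation \eqref{eq:ricci-flow-cf} once one knows \emph{a priori} barriers trapping $w_j$ between two smooth functions, which is exactly what Corollaries \ref{cor:yau1} and \ref{cor:yau2} together with the direct comparison principle (Theorem \ref{thm:direct-comp-principle}) supply.

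Next I would establish monotonicity $g_j(t)\le g_{j+1}(t)$ on $\cd_{1-1/j}$: this is an application of Theorem \ref{thm:direct-comp-principle} to the conformal factors $w_j$ and $w_{j+1}|_{\cd_{1-1/j}}$, using that at $t=0$ they agree and that on the lateral boundary $\dop\cd_{1-1/j}$ one has $w_j=+\infty\ge w_{j+1}$. Combined with the \emph{uniform} upper barrier $w_j(t,x)\le \ln\frac{2}{1-|x|^2}+\tfrac12\ln(2t+1)$ coming from Corollary \ref{cor:yau2} and Theorem \ref{thm:direct-comp-principle} (the argument is verbatim part (C) of Lemma \ref{lemma:apriori-estimates}, with $\cd_{1-\delta}$ replaced by $\cd_{1-1/j}$), the sequence $w_j$ is locally uniformly bounded above and increasing, hence converges pointwise to some $w$ with $w(t,x)\le\ln\frac{2}{1-|x|^2}+\tfrac12\ln(2t+1)$. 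Interior parabolic regularity (Schauder estimates for \eqref{eq:ricci-flow-cf}, whose coefficients stay uniformly controlled on compact subsets of $(0,\infty)\times\cd$ because $w$ is locally bounded above and, via part (B) of Lemma \ref{lemma:apriori-estimates} applied to each $g_j$, locally bounded below for $t>0$) upgrades this to smooth local convergence on $(0,\infty)\times\cd$, so $G(t):=\ee^{2w(t)}|\dz|^2$ is a smooth Ricci flow there. Continuity up to $t=0$ with $G(0)=g_0$ follows by sandwiching: from below, part (D) of Lemma \ref{lemma:apriori-estimates} applied to each $g_j$ gives $w_j(t,x)\ge u_0(x)-C_jt$ --- here one must take a little care that the constant can be chosen locally uniformly in $j$, which it can, since near any fixed compact set $K[w_j(0)]=K[u_0]$ is fixed and the upper curvature bound propagates forward with a constant depending only on $\sup_K K[u_0]$ --- and from above by the uniform barrier (C), and both barriers agree with $u_0$ at $t=0$. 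Completeness of $G(t)$ for $t>0$ is immediate from the lower bound (B), which passes to the limit: $w(t,x)\ge\ln\frac{2}{1-|x|^2}+\tfrac12\ln(2t)$, so $G(t)$ dominates the complete hyperbolic-type metric $2t\bigl(\tfrac{2}{1-|x|^2}\bigr)^2|\dz|^2$.

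The curvature bounds are then almost free. For the upper bound, the flow $G(t)$ is complete with locally bounded curvature, so the scalar maximum principle applied to $\pl{K}{t}=\lap K+2K^2$ --- comparing with the spatially constant subsolution solving $\dot k=2k^2$, $k(0)=-1$ --- gives $K[G(t)]\le -\tfrac1{2t+1}$; alternatively this descends from the corresponding bound on each $g_j$, which holds by the same ODE comparison once one notes the approximating flows are complete with bounded curvature on compact time intervals. For the strict lower bound $K[G(t)]>-\tfrac1{2t}$: the non-strict inequality $K[G(t)]\ge-\tfrac1{2t}$ is part (A) of Lemma \ref{lemma:apriori-estimates} (our $G$ is an admissible flow), and strictness follows from the strong maximum principle applied to $v:=K+\tfrac1{2t}$, which satisfies a linear parabolic inequality $\pl{v}{t}\ge\lap v + (\text{bounded})\cdot v$ on $(0,\infty)\times\cd$ and is $\ge0$, so is either identically zero --- impossible since then $G$ would be the shrinking hyperbolic metric, which is not instantaneously complete at $t=0$ unless $g_0$ already were, contradicting incompleteness being allowed, and in any case is incompatible with $G(0)=g_0$ smooth with $K[u_0]\le-1$ not constant; if $K[u_0]\equiv-1$ one checks directly the flow is $\tfrac{2t+1}{1-|x|^2}$-type and the inequality is strict for $t>0$ --- or strictly positive. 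Finally, maximality: given any Ricci flow $g(t)=\ee^{2v(t)}|\dz|^2$ on $[0,\ep]\times\cd$ with $g(0)=g_0$, for each $j$ compare $v$ with $w_j$ on $\cd_{1-1/j}$ via Theorem \ref{thm:direct-comp-principle}: they agree at $t=0$ and on the lateral boundary $w_j=+\infty\ge v$, so $v\le w_j\le w$ on $\cd_{1-1/j}$; letting $j\to\infty$ gives $g(t)\le G(t)$ on all of $[0,\ep]\times\cd$.

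I expect the main obstacle to be the construction of the approximating flows with instantaneously complete (Dirichlet-$+\infty$) boundary behaviour and the accompanying \emph{a priori} estimates --- in particular justifying the monotonicity $g_j\le g_{j+1}$ and the continuity of $w$ down to $t=0$ uniformly on compact sets. Both hinge on having the direct comparison principle stated in enough generality to allow comparison with functions blowing up at the spatial boundary, and on the observation that all the relevant barriers ($h_\delta$, the Yau corollaries, the ODE comparison for $K$) are already assembled in Section \ref{section: apriori}; modulo that input, the limiting argument and the extraction of the curvature bounds are routine.
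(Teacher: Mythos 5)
Your overall architecture — exhaust $\cd$ by subdiscs, produce an approximating flow on each, take a monotone limit, then read off curvature bounds, completeness and maximality from the comparison principles — matches the paper's. But the step you flag as "the main obstacle" and then dismiss as routine is precisely where the paper does something different and essential, and glossing over it leaves a real gap. You propose to solve, on each $\overline{\cd_{1-1/j}}$, an initial--boundary-value problem for \eqref{eq:ricci-flow-cf} with initial data $u_0$ and Dirichlet data $+\infty$ on the lateral boundary, calling this standard parabolic theory once barriers are available. It isn't: the diffusion coefficient $\ee^{-2w_j}$ degenerates exactly where $w_j\to+\infty$, and the bounded initial data cannot match the infinite lateral data at $t=0$, so this is itself a nontrivial existence problem — essentially a bounded-domain instance of the theorem you are trying to prove. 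The paper sidesteps it by modifying the \emph{initial data} rather than the boundary behaviour: it sets $\bar g_k=\ee^{2\Psi(h_k-u_0)}g_0$ on $D_k$, a smoothed pointwise maximum of $g_0$ and the complete constant-curvature metric $h_k$, and then checks directly that $\bar g_k$ is complete on $D_k$ with curvature pinched in $[-C_k,-\ee^{-2\eta}]$. Shi's theorem for complete bounded-curvature manifolds then gives each approximating flow for free, with no boundary to worry about; this $\Psi$-trick is the key input your proposal lacks. A subsidiary but genuine error: even granting your $g_j$, Theorem \ref{thm:direct-comp-principle} applied on $\cd_{1-1/j}$ with $v=w_j$ (blowing up at $\dop\cd_{1-1/j}$) and $u=w_{j+1}|_{\cd_{1-1/j}}$ (finite there) gives $w_j\ge w_{j+1}$, so the sequence is \emph{decreasing}, not increasing as you claim; the paper indeed takes a decreasing limit (bounded below since $\dop u_k/\dop t=-K\ge0$ forces $u_k\ge\bar u_k\ge u_0$). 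This direction error does not by itself kill the argument, but your write-up of the limit and of the continuity at $t=0$ should be adjusted accordingly. Finally, your proposal does not anticipate the paper's last step: the construction above gives, for each $\eta>0$, a maximal flow with $K\le-\ee^{-2\eta}$ at $t=0$; the sharp bound $K\le-\tfrac1{2t+1}$ is obtained only at the end by noting that maximal solutions are unique (so all the $G$'s coincide) and letting $\eta\downto0$.
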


The properties described in Lemma \ref{lemma:apriori-estimates} also 
apply to these solutions.

\begin{proof}
We follow the basic strategy of \cite{Top}, constructing 
$G(t)$ as a limit of approximating Ricci flows on smaller
base manifolds. We make some simplification of the convergence,
and exploit what we know about the conformal type to simplify
the proof of instantaneous completeness in this special case.

Let $u_0:\cd\to\R$ be the conformal factor of $g_0$, that is,
$g_0=\ee^{2u_0}|\dz|^2$.

For each $k\in\N$, define $D_k:=\cd_{1-\frac1{k+1}}$
to be the disc of radius $1-\frac1{k+1}$,
and let $h_k:D_k\to\R$ defined by
\[
  h_k(x) :=
  \ln\frac{\frac{2}{k+1}}{\bigl(1-\frac{1}{k+1}\bigr)^2-|x|^2}\]
be the conformal factor of
the complete conformal metric of curvature $-k^2$ on $D_k$.
Note that $h_k$ is pointwise (weakly) decreasing 
in the sense that for all $x\in\cd$, and $k$ sufficiently
large so that $x\in D_k$, the sequence $h_k(x)$ is weakly
decreasing.

Loosely following \cite{Top}, given $\eta>0$ we 
choose a smooth cut-off 
function $\Psi:\R\to\R$ with the properties
that $\Psi(s)=0$ for $s\leq -\eta$, $\Psi(s)=s$ for $s\geq \eta$,
and $\Psi''(s)\geq 0$ for all $s$. Then 
$0\leq \Psi'\leq 1$ and $\Psi(s)\geq s$ for all $s$.
We use $\Psi$ to define the metric
$$\bar g_k = \ee^{2\Psi(h_k-u_0)}g_0$$
on $D_k$, 
which can be viewed as a smoothed-out `pointwise maximum' of
the metrics represented by $u_0$ and $h_k$.
Writing $\bar u_k:D_k\to\R$ for the conformal factor of 
$\bar g_k$, that is, $\bar g_k=\ee^{2\bar u_k}|\dz|^2$,
we have 
$$\bar u_k\geq u_0|_{D_k}\text{ and }\bar u_k\geq h_k.$$
Just as in \cite[\S 4]{Top}, we see that 
$K[\bar g_k]$ is bounded below (with lower bound dependent on $k$)
and abbreviating $w_k:= h_k-u_0|_{D_k}$, 
we compute the uniform upper curvature bound
\begin{align*}
  K[\bar g_k] &= -\ee^{-2\bigl(\Psi(w_k)+u_0\bigr)}
  \Delta\bigl(\Psi(w_k)+u_0\bigr) \\
  &= -\ee^{-2\bigl(\Psi(w_k)+u_0\bigr)}
  \left( \Psi''(w_k)\bigl|\nabla w_k\bigr|^2
    + \Psi'(w_k)\Delta(h_k-u_0) + \Delta u_0 \right)\\
  &\le -\ee^{-2\bigl(\Psi(w_k)+u_0\bigr)} \left(
    \Psi'(w_k)\Delta h_k + \bigl(1-\Psi'(w_k)\bigr)\Delta u_0
  \right)\\
  &= \ee^{-2\bigl(\Psi(w_k)+u_0\bigr)} \Bigl(
    \Psi'(w_k)\left(\ee^{2h_k}K[h_k]\right) 
    + \bigl(1-\Psi'(w_k)\bigr)\left(\ee^{2u_0}K[u_0]\right) 
  \Bigr)\\
  &= \Psi'(w_k)\ee^{-2\bigl(\Psi(w_k)-w_k\bigr)}
    K[h_k]
    + \bigl(1-\Psi'(w_k)\bigr)\ee^{-2\Psi(w_k)}
    K[u_0]
  \\
  &\le \ee^{-2\eta}\Bigl(\Psi'(w_k)K[h_k]
    + \bigl(1-\Psi'(w_k)\bigr)K[u_0]\Bigr)\\
  &\le -\ee^{-2\eta}.
\end{align*}
In the last-but-one line we used the facts that both
$K[h_k]$ and $K[u_0]$ are negative, and also that 
$\Psi(s)-s \le\eta$ where $\Psi'(s)\neq 0$ (i.e. for $s\geq -\eta$) 
and $\Psi(s)\le\eta$ where $\Psi'(s)\neq 1$ (i.e. for $s\leq \eta$).
The last line follows from the fact that both
$K[h_k]\leq -1$ and $K[u_0]\le -1$.

The conformal factors $\bar u_k$ are (weakly) decreasing 
(as the $h_k$ are decreasing) and
$$\lim_{k\to\infty} \bar u_k(x)=u_0(x).$$

Let $g_k(t)$ be the Ricci flow as given by Shi \cite{shi} with
$g_k(0)=\bar g_k$, on $D_k$ over a maximal time interval
$[0,T)$.
Since these Ricci flows are each complete with bounded
curvature, the maximum principle (as in Lemma 
\ref{lemma:apriori-estimates}) tells us that
$K[g_k(t)]\geq -\frac{1}{2t}$ for $t>0$ but also that 
$K[g_k(t)]\leq -\frac1{2t+\ee^{2\eta}}$ for $t\geq 0$.
In particular, we must have $T=\infty$ -- i.e. long-time
existence for each $g_k(t)$ --
since the curvature is known to blow up at a singularity
of a Ricci flow.

Let $u_k:[0,\infty)\times D_k\to\R$ be the conformal factor of $g_k$,
that is, $g_k=\ee^{2u_k}|\dz|^2$.
Since the conformal factors $u_k(0)=\bar u_k$ are decreasing
in $k$,
we can compare $u_k|_{\overline{D_{k-1}}}(t)$ and $u_{k-1}(t)$ using
Theorem \ref{thm:direct-comp-principle} to find that 
the sequence $u_k(t)$ is (weakly) decreasing in the
sense that at each point $x\in\cd$ and $t\geq 0$,
for sufficiently large $k$ so that $x\in D_k$
we have $u_k(t,x)$ (weakly) decreasing.

By virtue of \eqref{eq:ricci-flow-cf} we have
$$\pl{u_k(t)}{t}=-K[u_k(t)]\geq 0,$$
and hence $u_0(x)\leq \bar u_k(x) \leq u_k(t,x)$ for all
$x\in D_k$ and $t>0$, so it makes sense to define 
$u:[0,\infty)\times\cd\to\R$ by
$$u(t,x)=\lim_{k\to\infty}u_k(t,x),$$
and consider the corresponding metric flow $G(t):=\ee^{2u(t)}|\dz|^2$.

By parabolic regularity theory we can see that $G(t)$
will be a smooth Ricci flow inheriting the curvature estimates 
of $g_k(t)$ and satisfying $G(0)=g_0$.

To see the instantaneous completeness of $G(t)$, we compare
$u$ with the conformal factor 
$h(t,x):=\ln\frac2{1-|x|^2} + \frac12\ln(2t)$ of the 
`big-bang' Ricci flow which is the metric on $\cd$ of
constant curvature $-\frac{1}{2t}$ at time $t>0$.
Indeed, the comparison principle
of Theorem \ref{thm:direct-comp-principle}
tells us that
$h(t,x) \leq u_k(t,x)$
for all $x\in D_k$ and $t>0$, and 
therefore, by taking the limit $k\to\infty$, 
$h(t,x)\leq u(t,x)$
for all $x\in \cd$ and $t>0$.

The maximality of $G(t)$ follows from a similar comparison argument.
If $\tilde u:[0,\ep]\times\cd\to\R$ is the conformal factor of any
other Ricci flow with $\tilde u(0,\cdot)=u_0$, then 
the comparison principle 
of Theorem \ref{thm:direct-comp-principle}
tells us that
$\tilde u|_{D_k}(t,x) \leq u_k(t,x)$
for all $x\in D_k$ and $t\in [0,\ep]$, and 
therefore (taking $k\to\infty$)
$\tilde u(t,x)\leq u(t,x)$
for all $x\in \cd$ and $t\in [0,\ep]$.

We have almost finished, except that we appear to have constructed
a flow $G(t)$ for each $\eta>0$, and each of these is 
guaranteed only to have its Gauss curvature bounded above
by $-\frac1{2t+\ee^{2\eta}}$. However, it is not hard to see
that there can exist only one \emph{maximal} solution,
and so all of the flows $G(t)$ must be identical.
At this point we may take the limit $\eta\downto 0$
and deduce that
$K[G(t)]\leq -\frac1{2t+1}$ for $t\geq 0$, which completes
the proof.
\end{proof}

\section{Uniqueness}

The following theorem states the uniqueness part of the main Theorem
\ref{thm:main} and concludes its proof.
\begin{thm}
  \label{thm:uniqueness}
  Let $\ee^{2u_0}|\dz|^2$ be a smooth metric on the unit
  disk $\cd$ satisfying the upper curvature bound $K[u_0]\le-1$. 
  For some $T>0$ let $\ee^{2v(t)}|\dz|^2$ be an admissible Ricci flow
  (Definition \ref{defn:admissible-soln}) with
  $v(0)=u_0$.  
  Then $\ee^{2v(t)}|\dz|^2$ is unique among such instantaneously complete
  solutions. 
\end{thm}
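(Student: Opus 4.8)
The plan is to show that any admissible Ricci flow $\ee^{2v(t)}|\dz|^2$ with $v(0)=u_0$ coincides with the maximal solution $G(t)=\ee^{2u(t)}|\dz|^2$ constructed in Theorem \ref{thm:existence}. Since maximality immediately gives $v(t)\le u(t)$ (equivalently $\ee^{2v(t)}|\dz|^2\le G(t)$) on $[0,T]\times\cd$, the whole problem reduces to the reverse inequality $u(t)\le v(t)$. The key structural advantage we have is that, by Lemma \ref{lemma:apriori-estimates}, \emph{both} flows satisfy the same quantitative barriers: the lower curvature bound $K\ge -\frac1{2t}$, the lower conformal bound (B) $v(t,x)\ge\ln\frac2{1-|x|^2}+\frac12\ln(2t)$, and the matching upper bound (C) $v(t,x)\le\ln\frac2{1-|x|^2}+\frac12\ln(2t+1)$, with the identical statements for $u$. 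In particular the difference $u-v$ is bounded on $(0,T]\times\cd$ and tends to $0$ (uniformly) as $|x|\to 1$, because both conformal factors are squeezed between the same two barriers that blow up at the boundary at the same rate.

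The heart of the argument will be a comparison performed directly on the conformal factors, in the spirit of the logarithmic fast-diffusion equation. Set $\ph:=u-v$. Using \eqref{eq:ricci-flow-cf} for both flows, $\ph$ satisfies a linear parabolic equation of the form $\pl{\ph}{t}=a(t,x)\lap\ph + b(t,x)\ph$ on $(0,T]\times\cd$, where $a=\ee^{-2u}$ (or an intermediate value between $\ee^{-2u}$ and $\ee^{-2v}$ arising from the mean value theorem applied to $\ee^{-2u}\lap u-\ee^{-2v}\lap v$) is strictly positive, and $b$ is controlled using the curvature bounds. Because $\cd$ is noncompact (as a Riemannian manifold; or rather, because we make no assumption at spatial infinity) we cannot invoke a maximum principle off the shelf: this is the main obstacle. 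We overcome it exactly as in the existence proof, by exhausting $\cd$ with the subdiscs $\cd_{1-\de}$ and applying the direct comparison principle, Theorem \ref{thm:direct-comp-principle}, on each $\overline{\cd_{1-\de}}$. For this we need to know that on the boundary circle $\{|x|=1-\de\}$ the flow $v$ lies \emph{below} a suitable supersolution built from $u$; here the barrier estimates (B) and (C) are crucial, since they pin down the boundary behaviour of $v$ from both sides at a rate independent of which admissible flow we started with.

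Concretely, I would argue as follows. Fix $\de>0$ and compare $u|_{\cd_{1-\de}}$ with $v$. By (C) applied to $v$ and the explicit form of the barriers, for the competitor flow $v$ we have on $\partial\cd_{1-\de}$ a uniform bound that becomes large as $\de\to 0$; matching this against the lower barrier (B) for $u$ shows $u|_{\partial\cd_{1-\de}}-v\le o(1)$ as $\de\to 0$, uniformly in $t\in[0,T]$. Since $u(0,\cdot)=v(0,\cdot)=u_0$, the direct comparison principle applied on $[0,T]\times\cd_{1-\de}$ with the (small, $\de$-dependent) error on the lateral boundary yields $u|_{\cd_{1-\de}}\le v + \ep(\de)$ throughout, with $\ep(\de)\to 0$. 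Letting $\de\to 0$ gives $u\le v$ on $(0,T]\times\cd$, hence on all of $[0,T]\times\cd$ by continuity at $t=0$. Combined with $v\le u$ from maximality, this gives $v\equiv u$, so $\ee^{2v(t)}|\dz|^2=G(t)$ and uniqueness follows. The step I expect to require the most care is the passage to the lateral-boundary estimate: one must check that the $o(1)$ quantities coming from (B) and (C) are genuinely uniform in $t$ on the compact interval $[0,T]$ and that the direct comparison principle (Theorem \ref{thm:direct-comp-principle}) is applicable with an additive boundary defect, which may require first adding a harmless shift $\frac12\ln(2t+1)-\frac12\ln(2t)$-type correction, or applying the principle to $u$ against $v+\ep(\de)$ viewed as a supersolution of the same equation.
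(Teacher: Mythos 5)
Your overall strategy (use maximality from Theorem~\ref{thm:existence} to get $v\le u$, then fight for $u\le v$ using the barriers of Lemma~\ref{lemma:apriori-estimates}) matches the paper, but the mechanism you propose for the reverse inequality contains a genuine gap, and it is exactly the gap that the paper's geometric comparison principle (Theorem~\ref{thm:geom-comp-principle}) is designed to close.

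The problem is your claim that $u-v$ tends to $0$ as $|x|\to1$, uniformly in $t$. It does not. Estimates (B) and (C) squeeze both $u$ and $v$ between the \emph{same pair} of barriers $\ln\frac{2}{1-|x|^2}+\frac12\ln(2t)$ and $\ln\frac{2}{1-|x|^2}+\frac12\ln(2t+1)$, so the diverging $\ln\frac{2}{1-|x|^2}$ terms cancel and all you obtain is $|u-v|\le\frac12\ln\bigl(1+\frac1{2t}\bigr)$. This bound is independent of $x$: it does \emph{not} shrink as you approach $\partial\cd$, and it blows up as $t\downto0$. Consequently the lateral-boundary estimate $u|_{\partial\cd_{1-\de}}-v\le o(1)$ that your exhaustion-by-subdiscs argument hinges on is false, and the direct comparison principle on $\cd_{1-\de}$ will only ever return $u\le v+M$ for a fixed positive $M$, never $u\le v$. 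The attempted repair in your last sentence also fails: if $v$ solves $\pl{v}{t}=\ee^{-2v}\lap v$ then $v+\ep$ satisfies $\pl{}{t}(v+\ep)=\ee^{2\ep}\ee^{-2(v+\ep)}\lap(v+\ep)$, and since $\lap v\ge0$ (negative curvature) this makes $v+\ep$ a \emph{sub}solution, not a supersolution, so you cannot compare $u$ against $v+\ep$ this way.

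What the paper does instead is prove a separate \emph{geometric} comparison principle (Theorem~\ref{thm:geom-comp-principle}) whose hypothesis on $Q=u-\tilde v$ is merely that it be \emph{bounded above} — precisely what (B) and (C) actually deliver — and which replaces the boundary control you were hoping for by the completeness of $g_2(t)$, via a weak maximum principle on the complete manifold. Two further devices are then needed which your sketch does not supply: (i) to make $Q$ bounded above down to $t=0$ (recall (B) only holds for $t>0$), one shifts time and conformal factor together, setting $\tilde v(t,x)=v\bigl(\ee^{-2C\de}(t+\de),x\bigr)+C\de$, which is again an exact Ricci flow (this is the correct version of the additive shift you gesture at); and (ii) the initial inequality $u(0,\cdot)\le\tilde v(0,\cdot)$ is obtained from estimate (D), not from $u(0,\cdot)=v(0,\cdot)$. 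In short: the barriers control boundedness, not decay, of $u-v$ near the boundary, so you must trade the direct comparison principle on exhausting discs for a comparison principle on the whole complete surface, and you need the time-shift trick to handle $t=0$.
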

The proof relies on the following \emph{geometric} comparison result.
\begin{thm}[Geometric comparison principle]
  \label{thm:geom-comp-principle}
  Suppose $\bigl(\m, g_1(t)\bigr)$ and $\bigl(\m, g_2(t)\bigr)$ 
  are two conformally equivalent Ricci flows on some time interval
  $[0,T]$, and define $Q:[0,T]\times\m\to\R$ to be 
  the function for which
  $g_1(t)=\ee^{2Q(t)}g_2(t)$. 
  Suppose further that $g_2(t)$ is complete for each $t\in [0,T]$
  and that for some constant $C\ge0$ we have
  \[ \text{(i) } \bigl|K[g_2]\bigr|\le C,\qquad 
  \text{(ii) } K[g_1]\le C, \qquad
  \text{(iii) } Q\le C \]
  on $[0,T]\times\cd$. If $g_1(0)\le g_2(0)$, then 
  $g_1(t)\le g_2(t)$ for all $t\in [0,T]$.
\end{thm}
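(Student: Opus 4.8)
The plan is to translate the geometric statement $g_1(t)\le g_2(t)$ into a statement about the function $Q(t)$, where $g_1=\ee^{2Q}g_2$, and then run a maximum-principle / comparison argument on $Q$. Writing both flows in a common conformal coordinate, with $g_2=\ee^{2v}|\dz|^2$ and $g_1=\ee^{2(v+Q)}|\dz|^2$, and using \eqref{eq:ricci-flow-cf} for each of $v$ and $v+Q$, one finds that $Q$ satisfies a nice parabolic equation. Indeed, $\pl{Q}{t}=\pl{}{t}(v+Q)-\pl{}{t}v=-K[g_1]+K[g_2]=\ee^{-2(v+Q)}\lap(v+Q)-\ee^{-2v}\lap v$. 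Rearranging, $\pl{Q}{t}=\ee^{-2v}\bigl(\ee^{-2Q}\lap Q\bigr)+\bigl(\ee^{-2Q}-1\bigr)\ee^{-2v}\lap v$, and since $-\ee^{-2v}\lap v=K[g_2]$ this becomes $\pl{Q}{t}=\ee^{-2(v+Q)}\lap Q-\bigl(\ee^{-2Q}-1\bigr)K[g_2] = \ee^{-2v_1}\lap Q + \frac{1-\ee^{-2Q}}{2Q}\cdot 2Q\, K[g_2]$, i.e. $Q$ solves a linear (in $Q$) uniformly parabolic equation $\pl{Q}{t}=a\lap Q+b\,Q$ with $a=\ee^{-2v_1}>0$ and $b$ bounded, where the bound on $b$ comes precisely from hypothesis (i), $|K[g_2]|\le C$, together with the fact that $\frac{1-\ee^{-2Q}}{2Q}$ is bounded on $Q\le C$ (hypothesis (iii)); note $\frac{1-\ee^{-2Q}}{2Q}\le 1$ always and is bounded below away from $0$ when $Q\le C$. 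We want to show $Q(0)\le 0$ implies $Q(t)\le 0$ for all $t$.

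The difficulty is that $\m$ (equivalently $\cd$) is noncompact, so the ordinary maximum principle does not immediately apply — $Q$ could in principle escape to $+\varepsilon$ near spatial infinity. This is where completeness of $g_2$ enters, and the natural tool is Omori--Yau-type maximum principle on the complete manifold $(\m,g_2(t))$: at a near-supremum of $Q$ the Laplacian term is almost nonpositive. To make this rigorous I would fix a small $\delta>0$ and consider $\hat Q:=Q-\delta(2Ct+1)$ (or add a similar time-dependent shift), aiming to show $\hat Q<0$ everywhere; the shift gives a strict sign in the ODE comparison so that a first touching time/point would yield a contradiction. At an interior touching point in space-time where $\sup\hat Q=0$ is first attained, one has $\pl{\hat Q}{t}\ge0$ and, by the Omori--Yau principle applied to the complete metric $g_2(t)$, a sequence along which $\lap_{g_2}\hat Q\to$ (something $\le0$) and $|\grad_{g_2}\hat Q|\to0$; feeding this into the evolution equation for $\hat Q$ contradicts the strict inequality built in by the $\delta$-shift. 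Then let $\delta\downto0$.

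One subtlety is that the evolution equation above was written in the flat Laplacian $\lap$ of the background coordinate, whereas the Omori--Yau principle naturally supplies control on $\lap_{g_2}$; since $\lap_{g_2}=\ee^{-2v}\lap$ these differ only by the positive conformal factor $\ee^{-2v}$ which is exactly the coefficient $a$ already present, so in fact the equation $\pl{Q}{t}=\lap_{g_2}Q + b Q$ is the clean invariant form to work with, and no coordinate artefact remains. I would also need to check that the coefficient $b=\bigl(1-\ee^{-2Q}\bigr)/(2Q)\cdot 2K[g_2]$ — equivalently that $-K[g_2]\,(1-\ee^{-2Q})$ — is uniformly bounded on $[0,T]\times\m$; this is immediate from (i) and (iii), using $0\le 1-\ee^{-2Q}\le 1$ when $Q\ge0$ (which is the only regime that matters near a positive supremum) and $|1-\ee^{-2Q}|\le \ee^{2C}-1$ when $-\infty<Q\le C$ in general. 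Hypothesis (ii), $K[g_1]\le C$, together with (i) also ensures the two flows have locally bounded geometry so that parabolic regularity makes all of the above differentiations legitimate. The main obstacle, as flagged, is controlling $Q$ at spatial infinity: everything hinges on deploying completeness of $g_2$ through an Omori--Yau maximum principle (or an equivalent exhaustion argument by geodesic cutoff functions adapted to $g_2(t)$), rather than on any assumed decay of $Q$ itself.
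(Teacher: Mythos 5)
Your overall plan---derive a parabolic inequality for $Q$ and close with a maximum principle on the complete manifold $(\m,g_2)$---is essentially the paper's strategy, but there is a concrete slip in identifying the second-order operator which also obscures how the three hypotheses are used. Writing $g_2=\ee^{2v}|\dz|^2$ and $g_1=\ee^{2(v+Q)}|\dz|^2$, your rearrangement yields coefficient $a=\ee^{-2(v+Q)}$ in front of $\lap Q$, which is $\ee^{-2Q}\lap_{g_2}=\lap_{g_1}$, \emph{not} $\lap_{g_2}$; your assertion that ``these differ only by the positive conformal factor $\ee^{-2v}$ which is exactly the coefficient $a$ already present, so in fact the equation $\pl{Q}{t}=\lap_{g_2}Q+bQ$ is the clean invariant form'' is therefore wrong. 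Since $g_1$ is not assumed complete, $\lap_{g_1}$ is the wrong operator to hand to Omori--Yau; the argument can be patched (near a positive supremum the prefactor $\ee^{-2Q}\le1$ still has a usable sign), but the equation is not in the claimed form. Your side remarks that $\frac{1-\ee^{-2Q}}{2Q}\le1$ ``always'' and that $|1-\ee^{-2Q}|\le\ee^{2C}-1$ on $(-\infty,C]$ are also false for very negative $Q$ (harmless, since only $Q\ge0$ matters, but symptomatic).

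The paper instead uses the complementary split: it writes $\bigl(\pddt-\lap_{g_2}\bigr)Q=\bigl(\ee^{-2u}-\ee^{-2v}\bigr)\lap u$, so that the second-order part is \emph{exactly} $\lap_{g_2}Q$, and the zeroth-order remainder involves $\lap u$, i.e.\ $K[g_1]$, rather than $K[g_2]$. Where $Q>0$ the mean value theorem gives $\bigl(\ee^{-2u}-\ee^{-2v}\bigr)\lap u=2\ee^{2(u-\xi)}K[g_1]\,Q\le 2C\ee^{2C}Q$, using (ii) $K[g_1]\le C$ and (iii) $Q\le C$. Hypothesis (i) $|K[g_2]|\le C$ then enters not in the differential inequality but as the bounded-curvature condition on the complete background needed to invoke a ready-made weak maximum principle on noncompact manifolds (\cite[Theorem 12.10]{Cho08}), rather than re-proving it via Omori--Yau with a $\delta$-shift. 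So your assignment of (ii) to ``regularity'' misses its essential role in the zeroth-order bound, and the split you chose forces the cited maximum principle out of reach in its standard form; switching $\lap u\leftrightarrow\lap v$ in the rearrangement is precisely what makes everything line up.
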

\begin{proof}
With respect to a local complex coordinate $z$, let us write
$g_1(t)=\ee^{2u(t)}|\dz|^2$ and $g_2(t)=\ee^{2\tilde v(t)}|\dz|^2$
for some locally defined functions $u(t)$ and $\tilde v(t)$,
and note then that $Q=u-\tilde v$.
  Since $g_1(t)$ and $g_2(t)$ are Ricci flows, we get 
  \[ \frac{\partial(u-\tilde v)}{\partial t} = \ee^{-2u}\Delta u - \ee^{-2\tilde
    v}\Delta\tilde v  
  = \left(\ee^{-2u}-\ee^{-2\tilde v}\right)\Delta u + \ee^{-2\tilde
    v}\Delta(u-\tilde v). \] 
  Writing $\Delta_{g_2(t)}$ for the Laplace-Beltrami operator 
  with respect
  to the metric $g_2(t)$, we obtain, 
  where $Q>0$ (i.e. where $u>\tilde v$)
  \begin{align*}
  \left(\pddt-\Delta_{g_2(t)}\right)Q
    &=
    \left(\pddt-\ee^{-2\tilde v}\Delta\right)(u-\tilde v)\\
    &= \left(\ee^{-2u}-\ee^{-2\tilde v}\right)\Delta u 
    = (u-\tilde v)\frac{\ee^{-2u}-\ee^{-2\tilde v}}{u-\tilde v}\Delta u \\
    &= (u-\tilde v)(-2)\ee^{-2\xi}\Delta u 
    = 2(u-\tilde v)\ee^{2(u-\xi)}\left(-\ee^{-2u}\Delta u\right) \\
    &= 2\ee^{2(u-\xi)}K[g_1]\,(u-\tilde v)\\
    & \le 2\ee^{2C}C(u-\tilde v)=(2C\ee^{2C}) Q,
  \end{align*}
  where at each point, $\xi$ was chosen between $u$ and $\tilde v$ according to
  the mean value theorem. Applying the weak maximum principle 
  to $Q$
  (see for example \cite[Theorem 12.10]{Cho08}
  with $g_2(0)$ as complete
  background metric with bounded curvature and $g_2(t)$ as
  one-parameter family of complete metrics) keeping in
  mind that $Q(0,\cdot)\leq 0$, we conclude 
  that $Q\leq 0$ throughout $[0,T]\times \m$ as desired.
\end{proof}

\begin{proof}[Proof of Theorem \ref{thm:uniqueness}]
  Theorem \ref{thm:existence} provides the existence of such an admissible
  solution $\ee^{2u(t)}|\dz|^2$ with $u(0)=u_0$. Let $\ee^{2v(t)}|\dz|^2$ be
  any another admissible solution with the same initial condition $v(0)=u_0$.
  From Theorem \ref{thm:existence} we know that $u(t)$
  is maximal among such instantaneously complete solutions, 
  and in particular, $v(t)\le
  u(t)$ for all $t\in[0,T]$. Hence it remains to show the converse
  inequality $u(t)\le v(t)$.

  Let $C>0$ be the uniform upper bound of the curvature of
  $v$: $K[v(t)]\le C$ for all $t\in[0,T]$, which exists
  since $v$ is admissible. For small $\delta\in(0,T)$ define
  \[ \tilde{v}(t,x) := v\left(\ee^{-2C\delta}(t+\delta),x\right) 
  + C\delta
  \qquad\quad \text{for $(t,x)\in[0,T-\delta]\times\cd$,} \]
  which is a slight adjustment of $v$, 
  again a solution to the Ricci flow:
  \[ \left(\pddt\tilde v - \ee^{-2\tilde v}\Delta\tilde v\right)(t,x)
  = \ee^{-2C\delta}\left(\pddt v-\ee^{-2v}\Delta v\right)
  \left(\ee^{-2C\delta}(t+\delta),x\right)=0.  \]
Our aim is to show that $u$ is a lower bound for $\tilde v$, and
hence (by taking $\de\downto 0$) also a lower bound for $v$ 
as desired.
To do this, we wish to apply Theorem \ref{thm:geom-comp-principle}
to the Ricci flows $g_1(t)$ and $g_2(t)$ generated by the 
conformal factors $u$ and $\tilde v$ respectively. 

First, note that $g_2(t)$ is complete for all $t\in [0,T-\de]$
since $\ee^{2 v}|\dz|^2$ is an admissible Ricci flow
and is therefore complete for all $t\in (0,T]$.
Furthermore, $g_2(t)$ has upper and lower curvature bounds:
  \begin{equation}
    \label{eq:uniq-Ktv}
    \bigl|K[\tilde v]\bigr| 
    \le 
    \sup_{\left[\ee^{-2C\delta}\delta,T\right]\times\cd}
    \ee^{-2C\delta}\bigl|K[v]\bigr| < \infty, 
  \end{equation}
so hypothesis (i) of Theorem \ref{thm:geom-comp-principle} is satisfied.
The upper bound for the curvature of $g_1(t)=\ee^{2u}|\dz|^2$ 
required by hypothesis (ii) of Theorem \ref{thm:geom-comp-principle}
follows since $g_1(t)$ was constructed to be admissible.

Next we verify hypothesis (iii) of Theorem \ref{thm:geom-comp-principle},
namely that $u-\tilde v$ is bounded from above.
Applying (C) of Lemma \ref{lemma:apriori-estimates} to 
$\ee^{2u}|\dz|^2$, we find that 
\begin{equation}
\label{eq:u}
u(t,x)\leq \ln\frac2{1-|x|^2} + \frac12\ln\left(2t+1\right),
\end{equation}
for $t\in [0,T]$, 
while (B) of Lemma \ref{lemma:apriori-estimates} applied to
$\ee^{2v}|\dz|^2$ gives
$$v(t,x)\geq \ln\frac2{1-|x|^2} + \frac12\ln\left(2t\right),$$
for $t\in (0,T]$ and hence that
\begin{align}
\tilde v(t,x) &\geq \ln\frac2{1-|x|^2} + 
\frac12\ln\left(2\ee^{-2C\de}(t+\de)\right)+C\de \notag\\
&= \label{eq:tildev}
\ln\frac2{1-|x|^2} + 
\frac12\ln\bigl(2(t+\de)\bigr)
\end{align}
for $t\in [0,T-\de]$. Subtracting \eqref{eq:tildev} from 
\eqref{eq:u}, we find that
$$u-\tilde v \leq \frac12\ln\left(2T+1\right)
-\frac12\ln\left(2\de\right)$$
as desired.

The final hypothesis of Theorem \ref{thm:geom-comp-principle}
to verify is that $g_1(0)\leq g_2(0)$, i.e. that 
$u(0,\cdot)\leq \tilde v(0,\cdot)$.
But 
$$\tilde v(0,\cdot)=v\left(\ee^{-2C\de}\de,\cdot\right)+C\de
\geq u_0 -C\ee^{-2C\de}\de+C\de\geq u_0=u(0,\cdot)$$
by part (D) of Lemma \ref{lemma:apriori-estimates}
as desired.

We may therefore apply Theorem \ref{thm:geom-comp-principle}
over the time interval $[0,T-\de]$ to deduce that
$u(t)\leq \tilde v(t)$ for all $t\in [0,T-\de]$.
Hence, given any $(t,x)\in [0,T)\times \cd$,
we conclude
\[ u(t,x)\leq \lim_{\de\downto 0}\tilde v(t,x)= v(t,x). \qedhere \]
\end{proof}

\begin{appendix}
\section{Comparison principle}

In this appendix we clarify the statement and proof of one
of the many variants of the standard weak maximum principle.

\begin{thm}[Direct comparison principle]
  \label{thm:direct-comp-principle}
  
  Let $\Omega\subset\mathbb{R}^2$ be an open, bounded domain and for some
  $T>0$ let $u\in C^{1,2}\bigl((0,T)\times\Omega\bigr)\cap
  C\bigl([0,T]\times\bar\Omega\bigr)$ and $v\in
  C^{1,2}\bigl((0,T)\times\Omega\bigr)\cap
  C\bigl([0,T]\times\Omega\bigr)$ both be solutions of the Ricci flow equation \eqref{eq:ricci-flow-cf}
  for the conformal factor of the metric.
  Furthermore, suppose that  for each $t\in[0,T]$ we have
  $v(t,x)\to\infty$ as $x\to\partial\Omega$. 
  If $v(0,x)\ge u(0,x)$ for all $x\in\Omega$, then $v\ge u$ on
  $[0,T]\times\Omega$.
\end{thm}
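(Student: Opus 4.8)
The plan is to prove the Direct comparison principle (Theorem~\ref{thm:direct-comp-principle}) by a barrier-and-maximum-principle argument, treating the blow-up of $v$ at $\partial\Omega$ as the device that keeps the maximum of $u-v$ away from the spatial boundary. First I would reduce to showing $v \ge u$ on $[0,T']\times\Omega$ for each $T' < T$; since $u,v$ are continuous up to $t=T$ on the relevant sets and the statement for all $T' < T$ gives the statement on $[0,T)\times\Omega$, and then continuity in $t$ up to $T$ extends it to $[0,T]\times\Omega$. This lets me work on a time interval on which I have a little room to spare and avoids worrying about behaviour exactly at the final time.

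Next I would introduce the standard perturbation: for $\ep>0$ set $w := u - v - \ep t$ (or, if needed, $w := u - v - \ep t - \ep/(T-t)$ to force $w\to-\infty$ as $t\to T$, though the reduction above should make the simpler choice suffice on $[0,T']$). On $[0,T']\times\bar\Omega$ one has $w(0,\cdot)\le 0$, and because $v(t,x)\to\infty$ as $x\to\partial\Omega$ for each fixed $t$ while $u$ is bounded on the compact set $[0,T']\times\bar\Omega$, one has $w(t,x)\to-\infty$ as $x\to\partial\Omega$, uniformly enough that $\sup_{[0,T']\times\Omega} w$ is attained at some interior-in-space point $(t_0,x_0)$ with $x_0\in\Omega$ and $t_0\in(0,T']$ --- unless the supremum is already $\le 0$, in which case we are done. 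Supposing for contradiction that $w(t_0,x_0) > 0$, i.e. $u(t_0,x_0) > v(t_0,x_0)$, at this interior spatial maximum of $w(t_0,\cdot)$ we have $\nabla u = \nabla v$ and $\Delta(u-v)\le 0$ there, and in $t$ we have $\pl{}{t}(u-v)\ge \ep > 0$ at $(t_0,x_0)$ (using $\partial_t w \ge 0$ at a maximum over $[0,T']$, with the usual one-sided inequality if $t_0=T'$).

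Then I would derive the contradiction from the PDE. Subtracting the two copies of \eqref{eq:ricci-flow-cf},
\[
\pl{}{t}(u-v) = \ee^{-2u}\Delta u - \ee^{-2v}\Delta v
= \bigl(\ee^{-2u}-\ee^{-2v}\bigr)\Delta u + \ee^{-2v}\Delta(u-v).
\]
At $(t_0,x_0)$ the second term is $\le 0$ since $\Delta(u-v)\le 0$ there; for the first, write $\Delta u = \Delta v + \Delta(u-v)$, so $\ee^{-2u}\Delta u = \ee^{-2u}\Delta v + \ee^{-2u}\Delta(u-v)$, giving
\[
\pl{}{t}(u-v)\big|_{(t_0,x_0)} = \bigl(\ee^{-2u}-\ee^{-2v}\bigr)\Delta v + \ee^{-2u}\Delta(u-v) \le \bigl(\ee^{-2u}-\ee^{-2v}\bigr)\Delta v.
\]
Since $u(t_0,x_0) > v(t_0,x_0)$ we have $\ee^{-2u}-\ee^{-2v} < 0$ there, so $\pl{}{t}(u-v)\le 0$ at $(t_0,x_0)$ as soon as $\Delta v(t_0,x_0)\ge 0$; if instead $\Delta v(t_0,x_0) < 0$ then $K[v] = -\ee^{-2v}\Delta v > 0$ there, and one can run the same line keeping the $\ee^{-2u}\Delta(u-v)$ term --- this is $\le 0$ --- together with $(\ee^{-2u}-\ee^{-2v})\Delta v$, which is the product of a negative number and a negative number, hence positive; so to cover this case cleanly I would instead expand around $\Delta(u-v)$ the other way, writing $\Delta v = \Delta u - \Delta(u-v)$ and $\ee^{-2v}\Delta v = \ee^{-2v}\Delta u - \ee^{-2v}\Delta(u-v)$, yielding $\pl{}{t}(u-v) = (\ee^{-2u}-\ee^{-2v})\Delta u + \ee^{-2v}\Delta(u-v)$ with the second term $\le 0$; then use the mean value theorem $\ee^{-2u}-\ee^{-2v} = -2\ee^{-2\xi}(u-v)$ with $\xi$ between $v$ and $u$, so the first term is $-2\ee^{-2\xi}(u-v)\Delta u = 2\ee^{2(u-\xi)}(u-v)\,\ee^{-2u}\Delta u = 2\ee^{2(u-\xi)}(u-v)\,(-K[u])$ --- and here I would need a sign or a bound on $K[u] = K[g_1]$.

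The main obstacle, then, is precisely this: the bare hypotheses as stated give no sign condition on $K[u]$ at an interior maximum, so the sign of $\pl{}{t}(u-v)$ is not immediately forced. I expect the clean route is to exploit the first-order information: at the spatial maximum $x_0$ of $w(t_0,\cdot)$ we have not only $\Delta(u-v)\le 0$ but, more usefully, $\nabla u(t_0,x_0) = \nabla v(t_0,x_0)$, and I can feed this into the \emph{difference} of the equations after multiplying the $v$-equation by a suitable factor so that the reaction term's sign is controlled --- equivalently, apply the maximum principle to $Q = \ee^{2(u-v)}$ or to $u-v$ after a change of variables that linearises the diffusion, along the lines of the computation in the proof of Theorem~\ref{thm:geom-comp-principle}. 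Concretely, I would follow that proof: where $u>v$ set $Q = u-v$ and compute
\[
\left(\pl{}{t} - \ee^{-2v}\Delta\right)Q = \bigl(\ee^{-2u}-\ee^{-2v}\bigr)\Delta u = 2\ee^{2(u-\xi)}K[u]\,Q,
\]
and since $K[u]\le 0$ is \emph{not} assumed here either, I instead bound $\ee^{2(u-\xi)}K[u]$ from above using only that $K[u]$ is bounded above on compact subsets (which follows from $u\in C^{1,2}$, hence $K[u]$ continuous, on the compact set $[0,T']\times\bar\Omega'$ for $\Omega'\Subset\Omega$ a neighbourhood of where $w>0$ can occur, the point being that $w>0$ forces $u>v$ which by the boundary blow-up of $v$ confines $x$ to a compact subset of $\Omega$). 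With $\ee^{2(u-\xi)}\le 1$ on $\{u\ge v\}$ (as $\xi\ge v$ there so $u-\xi\le u-v$ is not automatically $\le 0$ --- careful: $\xi$ between $v$ and $u$ with $u>v$ means $v\le\xi\le u$, so $u-\xi\ge 0$, hence $\ee^{2(u-\xi)}\ge 1$) I would instead bound $\ee^{2(u-\xi)}\le \ee^{2(u-v)} = \ee^{2Q}\le \ee^{2Q^+}$ where $Q^+ = \sup w^+$ is finite, and combine with a bound $K[u]\le C_0$ on the relevant compact set to get $(\pl{}{t}-\ee^{-2v}\Delta)Q \le \La Q$ for a constant $\La$, wherever $Q>0$. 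Then the weak maximum principle --- now applicable because the barrier supplied by $v\to\infty$ at $\partial\Omega$ confines the analysis to a region where $v$ is a complete background-like metric with locally bounded curvature, matching the setup of \cite[Theorem 12.10]{Cho08} as used in the proof of Theorem~\ref{thm:geom-comp-principle} --- together with $Q(0,\cdot)\le 0$ forces $Q\le 0$ on $[0,T']\times\Omega$, i.e. $u\le v$; letting $T'\upto T$ and using continuity finishes the proof. The one genuinely delicate point to get right is justifying the application of the maximum principle on the non-compact domain $\Omega$, which is exactly where the hypothesis $v(t,x)\to\infty$ as $x\to\partial\Omega$ does its work: it guarantees that any positive interior maximum of $u-v-\ep t$ is attained at a spatial point bounded away from $\partial\Omega$, so no boundary term is lost.
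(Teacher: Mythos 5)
Your plan correctly identifies that the boundary blow-up of $v$ is what confines the problem to the interior, and you are right to sense danger in the term $(\ee^{-2u}-\ee^{-2v})\Delta u$. But the workaround you settle on has a genuine gap, and the paper's proof avoids the issue by a different device.

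Your chosen perturbation $w = u-v-\ep t$ places the putative contact point where $w$ is \emph{maximal and positive}, hence where $u>v$ strictly, so that $\ee^{-2u}\ne\ee^{-2v}$ there. After the mean value expansion this leaves a residual term proportional to $\ee^{2(u-\xi)}K[u]$, and you try to control it via an upper bound on $K[u]$. The hypotheses do not supply one: the theorem assumes only $u\in C^{1,2}\bigl((0,T)\times\Omega\bigr)\cap C\bigl([0,T]\times\bar\Omega\bigr)$, so $K[u]=-\ee^{-2u}\Delta u$ is continuous only on the \emph{open} set $(0,T)\times\Omega$ and may blow up as $t\to 0$. Your assertion that $K[u]$ is continuous on $[0,T']\times\bar\Omega'$ is therefore not justified --- $[0,T']\times\bar\Omega'$ is not contained in $(0,T)\times\Omega$ --- and this is not a technicality: in the paper's applications the comparison is run against flows whose curvature genuinely blows up as $t\downto 0$. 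Invoking \cite[Theorem 12.10]{Cho08} with $v$ as a ``complete background metric with bounded curvature'' also does not fit here, since $v$ in this theorem is just a boundary-blowing-up solution on a bounded planar domain, with no completeness or curvature hypotheses.

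The paper removes the problematic term entirely rather than bounding it. It perturbs $v$ by the reparametrisation
$v_\varepsilon(t,x) = v\bigl(\tfrac1\varepsilon\ln(\varepsilon t+1),x\bigr)+\tfrac12\ln(\varepsilon t+1)$,
which (by the scaling structure of $\partial_t u=\ee^{-2u}\Delta u$) is a \emph{strict} supersolution with the same initial data, and then argues at the \emph{first} time $t_0$ at which $\min_\Omega(v_\varepsilon-u)$ reaches $0$. At the minimising point $x_0$ one has $v_\varepsilon(t_0,x_0)=u(t_0,x_0)$, so $\ee^{-2v_\varepsilon}=\ee^{-2u}$ there and the difference of the two equations collapses to $\partial_t(v_\varepsilon-u)-\ee^{-2u}\Delta(v_\varepsilon-u)$; the strict supersolution property then gives an immediate sign contradiction, with no curvature bound on $u$ ever needed. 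By contrast, your additive $\ep t$ shift does not make $v+\ep t$ a supersolution (the term $(1-\ee^{-2\ep t})\ee^{-2v}\Delta v$ has uncontrolled sign), and your contact point is not a zero of $u-v$, which is exactly what re-introduces the $K[u]$ dependence. To repair your argument along your own lines, you would need to (a) replace $\ep t$ by a perturbation that actually yields a strict supersolution for this equation --- the paper's time-reparametrisation is the natural choice --- and (b) detect the \emph{first touching} of $v_\varepsilon$ and $u$ rather than the positive maximum of $u-v_\varepsilon$, so that the nonlinear coefficients agree at the contact point.
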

\begin{proof}
  For every $\varepsilon>0$ consider
  \[ v_\varepsilon(t,x) := v\left(\frac1\varepsilon\ln(\varepsilon t+1),x\right)
  +\frac12\ln(\varepsilon t+1)\qquad\text{for all }
  (t,x)\in[0,T]\times\Omega, \]
  which is well-defined since $\frac1\varepsilon\ln(\varepsilon t+1)\le
  t$ for all $t\ge0$. Observe that $v_\varepsilon$ is a slight modification of
  $v$, with $v_\varepsilon(0,\cdot)=v(0,\cdot)$, and $v_\varepsilon$ converges pointwise
  to $v$ as $\varepsilon\to0$, but in contrast to $v$ it is a
  strict supersolution of
  the Ricci flow \eqref{eq:ricci-flow-cf}:
  \begin{align}
    \label{eq:rf-cf-mod}
    \left(\pddt v_\varepsilon-\ee^{-2v_\varepsilon}\Delta
      v_\varepsilon\right)(t,x) 
    &= \frac1{\varepsilon t+1}\left(\pddt v-\ee^{-2v}\Delta
      v\right)\left(\frac1\varepsilon\ln(\varepsilon t+1),x\right)
    + \frac\varepsilon{2(\varepsilon t+1)} \notag\\
    &= \frac\varepsilon{2(\varepsilon t+1)} >0 
    \qquad\qquad \text{for }
    (t,x)\in[0,T]\times\Omega.
  \end{align}
  We are going to prove $(v_\varepsilon-u)\ge0$ on
  $[0,T]\times\Om$ and conclude the theorem's statement by letting
  $\varepsilon\to0$. 
  Since by hypothesis $u$ is continuous on $[0,T]\times\bar\Omega$ and
  $v_\varepsilon(t,\cdot)$ blows up near the boundary $\partial \Om$
  for each $t\in[0,T]$, we have  
  \[ (v_\varepsilon-u)(t,x)\to\infty\quad\text{as}\quad x\to\partial\Omega, \]
  for every time $t\in[0,T]$ and hence $(v_\varepsilon-u)(t,\cdot)$
  attains its infimum in $\Omega$.
  Now assume that $(v_\varepsilon-u)$ becomes negative in $[0,T]\times\Omega$,
  and define the time $t_0$ at which $(v_\varepsilon-u)$
  first becomes negative by 
  \[ t_0 := \inf\Bigl\{t\in[0,T]:
  \min_{x\in\Omega}(v_\varepsilon-u)(t,x)<0 \Bigr\}\in[0,T). \]
  Picking any minimum $x_0\in\Omega$ of $(v_\ep-u)(t_0,\cdot)$,
  we have  
  \[ (v_\varepsilon-u)(t_0,x_0) = 0,\quad
  \Delta(v_\varepsilon-u)(t_0,x_0)\ge0\quad\text{ and }\quad
  \pddt(v_\varepsilon-u)(t_0,x_0)\le0.\]
  Subtracting the Ricci flow equation
  \eqref{eq:ricci-flow-cf} from \eqref{eq:rf-cf-mod} 
  at this point $(t_0,x_0)$, we find
  \begin{align*}
    0 &< 
    \left(\pddt v_\varepsilon - \ee^{-2v_\varepsilon}\Delta
      v_\varepsilon\right)(t_0,x_0)    
    -\left(\pddt u - \ee^{-2u}\Delta u\right)(t_0,x_0) \\
    &= \pddt(v_\varepsilon -u)(t_0,x_0) -
    \ee^{-2u(t_0,x_0)}\Delta(v_\varepsilon-u)(t_0,x_0) \le 0,
  \end{align*}
  which is a contradiction.
  Therefore $v_\varepsilon\ge u$ on $[0,T]\times\Omega$, and
  the corresponding statement for $v$ follows by letting $\varepsilon\to0$.
\end{proof}

\section{Yau's Schwarz Lemma}
\label{chap:thm-yau}
For convenience, we prove now the Schwarz lemma of Yau
(Theorem \ref{thm:yau}).
The proof uses the following generalised maximum principle by 
Omori, whose
proof was simplified by Yau in \cite[Theorem 1, p. 206]{Yau75}:
\begin{thm}\cite[Theorem A$'$, p. 211]{Omo66}
  \label{thm:omori}
  On a complete Riemannian surface $(\m,g)$ with Gaussian curvature bounded
  from below, let $f$ be a $C^2$-function 
  which is bounded above.
  Then, for an arbitrarily point $p\in\m$ and for any $\varepsilon>0$,
  there exists a point $q$ depending on $p$ such that
  \[ \text{(i) } \Delta_gf(q)<\varepsilon;\qquad
  \text{(ii) } |\grad f(q)|_{g}<\varepsilon;\qquad
  \text{(iii) } f(q)\ge f(p).\]
\end{thm}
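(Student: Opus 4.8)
The plan is to follow the standard argument of Cheng and Yau: locate $q$ by perturbing $f$ with a radial weight that forces an honest maximum, then control the weight's first and second derivatives via the Laplacian comparison theorem. Fix a constant $K_0\ge 0$ with $K[g]\ge -K_0$ on $\m$ (equivalently $\Ric[g]\ge -K_0\,g$, since $\m$ is a surface), let $r:=d(\cdot,p)$, and put $\varphi:=\sqrt{1+r^2}$. By Hopf--Rinow, completeness makes $r$ proper, so $\varphi$ is a proper, Lipschitz exhaustion with $\varphi\ge 1$, $\varphi(p)=1$, and $\varphi$ smooth on $\m\setminus\mathrm{Cut}(p)$ (including near $p$, where $r^2$ is smooth). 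Where it is smooth one computes, using $|\grad r|=1$, the comparison estimate $\Delta_g r\le\sqrt{K_0}\coth(\sqrt{K_0}\,r)$, and the elementary inequality $s\coth s\le 1+s$, that
\[ |\grad\varphi|<1\qquad\text{and}\qquad \Delta_g\varphi\le 2+\sqrt{K_0}=:A. \]

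Given $\varepsilon>0$, I would fix $\mu\in(0,\varepsilon/A)$ and consider $F:=f-\mu\varphi$. Since $f$ is bounded above and $\varphi$ is proper, each set $\{F\ge c\}$ is bounded, hence compact by Hopf--Rinow; taking $c=F(p)$ shows $F$ attains its maximum at some $q\in\m$. From $F(q)\ge F(p)$ we get $f(q)\ge f(p)-\mu+\mu\varphi(q)\ge f(p)$, which is (iii), and this holds unconditionally. If $q\notin\mathrm{Cut}(p)$ then $\varphi$ is $C^2$ near $q$, so $\grad f(q)=\mu\grad\varphi(q)$ and $\Delta_g f(q)\le\mu\,\Delta_g\varphi(q)$, giving $|\grad f(q)|<\mu<\varepsilon$ and $\Delta_g f(q)\le\mu A<\varepsilon$, i.e. (i) and (ii).

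The main obstacle is the remaining case $q\in\mathrm{Cut}(p)$, where $\varphi$ is only Lipschitz; I would remove it with \emph{Calabi's trick}. Take a unit-speed minimizing geodesic $\gamma$ from $p$ to $q$, set $p_\delta:=\gamma(\delta)$ for small $\delta>0$, and define $\psi_\delta:=\sqrt{1+(\delta+d(\cdot,p_\delta))^2}$, which is smooth near $q$. The triangle inequality gives $\psi_\delta\ge\varphi$ everywhere, with equality at $q$; hence $f-\mu\psi_\delta$ also attains a global maximum at $q$ and is $C^2$ there, so $\grad f(q)=\mu\grad\psi_\delta(q)$ and $\Delta_g f(q)\le\mu\,\Delta_g\psi_\delta(q)$. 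Since $d(\cdot,p_\delta)$ is smooth near $q$ with unit gradient, $|\grad\psi_\delta(q)|<1$; and since $\Delta_g d(\cdot,p_\delta)\le\sqrt{K_0}\coth(\sqrt{K_0}(r(q)-\delta))$ there, the same computation as above shows $\Delta_g\psi_\delta(q)$ tends to a limit $\le A$ as $\delta\to 0$. Choosing $\delta$ small enough (for the now-fixed $q$) that $\mu\,\Delta_g\psi_\delta(q)<\varepsilon$ still holds, we again obtain (i) and (ii); so $q$ is the required point in every case.

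Finally, a word on self-containedness: the comparison estimate $\Delta_g r\le\sqrt{K_0}\coth(\sqrt{K_0}\,r)$ used above is itself a consequence of the lower curvature bound via Bochner's formula (or the second variation of arc length) applied to $r$ on $\m\setminus\mathrm{Cut}(p)$ --- the same analytic input underlying Omori's original geodesic argument --- so I would either cite it as standard or include its short derivation alongside this appendix.
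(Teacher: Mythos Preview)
Your argument is correct: the weight $\varphi=\sqrt{1+r^2}$ together with the Laplacian comparison $\Delta_g r\le\sqrt{K_0}\coth(\sqrt{K_0}r)$ yields a proper function with uniformly bounded gradient and Laplacian, the perturbed function $f-\mu\varphi$ then attains an interior maximum, and Calabi's trick disposes of the cut locus. One small point worth making explicit in the Calabi step is \emph{why} $q\notin\mathrm{Cut}(p_\delta)$: uniqueness of the minimising geodesic from $p_\delta$ to $q$ follows by corner--cutting (a second minimiser would concatenate with $\gamma|_{[0,\delta]}$ to give a broken path from $p$ to $q$ of length $d(p,q)$, which could then be shortened), and an index--form argument rules out $q$ being conjugate to $p_\delta$ along $\gamma$.

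Note, however, that the paper does \emph{not} actually prove this theorem: it cites Omori and Yau and offers only the one--line heuristic that $q$ should be sought as the point whose graph $(q,f(q))\in\m\times\R$ lies closest to $(p,k)$ for some very large $k$. That idea --- Yau's original simplification --- is a genuinely different mechanism from yours. In Yau's picture the curvature lower bound enters through Hessian/second--variation estimates for the product distance in $\m\times\R$, and the ``large $k$'' plays the role your small $\mu$ does; in your approach the curvature bound is consumed by the Laplacian comparison for $r$. Your route is the one most modern references take (it localises cleanly and extends to Ricci lower bounds in all dimensions without change), while Yau's graph picture is more geometric and closer in spirit to Omori's isometric--immersion setting. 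Either would be perfectly appropriate to include in the appendix.
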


The essential idea \cite{Yau75} to find $q$ is to imagine the point 
$\bigl(q,f(q)\bigr)$ on the graph of $f$ in $\m\times\R$ which
is closest to the point $(p,k)$ for some enormous $k\gg 1$.

\begin{proof}[Proof of Theorem \ref{thm:yau}]
  By dilating $g_1$ and $g_2$, we may assume $a_1=1=a_2$, that is
  $K[g_1]\ge-1 \ge K[g_2]$. 
Since we only need the theorem in the case that $f$ is 
strictly conformal, we will assume this in the proof and leave
the minor adjustments required for the full theorem to
the reader.\footnote{In the weakly conformal case
    $f$ might either be constant (nothing to prove) or have isolated singular
    points $P:=\{p_1, p_2,\ldots\}$. The function $w$ we define
    in \eqref{eq:def-w} will then have logarithmic
    singularities on $P$, but will be strictly negative 
    close to such singularities and the $\tilde{w}$ 
    of the proof could be
    adjusted to a smooth function on the whole of $\m_1$
    (including $P$) without altering anything where $w$ is
    positive.}
  Define $w\in C^\infty(\m_1)$ by 
  \begin{equation}
    \label{eq:def-w}
    f^*(g_2)=\ee^{2w}g_1.
  \end{equation}
  It remains to show that $w\le0$. Assume instead that 
  there exists $p\in\m_1$ with
  $w(p)>0$. Then we can choose an $\varepsilon\in(0,1)$ such
  that 
  \begin{equation}
    \label{eq:choice-eps}
    \varepsilon < \frac{\ee^{w(p)}-\ee^{-w(p)}}{1+\ee^{w(p)}}. 
  \end{equation}
  Now define 
  $\tilde w(x):= -\ee^{-w(x)}$ 
  for all $x\in\m_1$. 
  Since $(\m_1,g_1)$ is complete with curvature bounded from below and 
  $\tilde w$ is bounded above, we may apply Theorem \ref{thm:omori}
  to find a point $q\in\m_1$ with
  \begin{equation}
    \label{eq:gsl-Omega}
    \Delta_{g_1}\tilde w(q) < \varepsilon,\qquad
    |\nabla\tilde w|_{g_1}^2(q)<\varepsilon\quad\text{ and }\quad
    \tilde w(q) \ge \tilde w(p).
  \end{equation}
  Since 
  $x\mapsto -\ee^{-x}$ 
  is strictly increasing, we also have
  $w(q)\ge w(p)>0$. Now compute 
  \begin{align}
    \label{eq:yau-1}
    \Delta_{g_1}\tilde w
    &=\ee^{-w}\Delta_{g_1} w - \ee^{-w}|\nabla w|^2_{g_1} \notag\\
    &= \ee^{-w}\Delta_{g_1} w - \ee^{w}|\nabla \tilde w|^2_{g_1}.
  \end{align}
By computing with respect to a local complex coordinate,
we find that 
$$\lap_{g_1}w=-\ee^{2w} K[g_2]\circ f+K[g_1],$$
which together with the curvature estimates $-K[g_2]\geq 1$ and 
$K[g_1]\geq -1$ gives 
$\ee^{-w}\lap_{g_1}w\geq \ee^{w}-\ee^{-w}$, and so \eqref{eq:yau-1}
improves to
$$\Delta_{g_1}\tilde w \geq 
\ee^{w}(1-|\nabla \tilde w|^2_{g_1})
-\ee^{-w}.$$
Evaluating at $q$, using \eqref{eq:gsl-Omega} and the
fact that $w(q)\ge w(p)$, we obtain
$$\ep>\Delta_{g_1}\tilde w (q)\geq 
\ee^{w(q)}(1-\ep)-\ee^{-w(q)}
\geq \ee^{w(p)}(1-\ep)-\ee^{-w(p)}$$
and hence 
$$\ep>\frac{\ee^{w(p)}-\ee^{-w(p)}}{1+\ee^{w(p)}}$$
which contradicts \eqref{eq:choice-eps}.
\end{proof}

\end{appendix}

{\sc mathematics institute, university of warwick, coventry, CV4 7AL,
uk}\\
Giesen: \href{mailto:g.giesen@warwick.ac.uk}{g.giesen@warwick.ac.uk}\\
Topping: \url{http://www.warwick.ac.uk/~maseq}

\begin{thebibliography}{99}
\bibitem{BDPVDH} \textsc{M. Bertsch, R. Dal Passo} and \textsc{R. van der Hout},
\emph{Nonuniqueness for the heat flow of harmonic maps on the disk.\/}
Arch. Ration. Mech. Anal. {\bf 161} (2002) 93--112.

\bibitem{chenzhu} \textsc{B.-L. Chen} and \textsc{X.-P. Zhu}, \emph{Uniqueness
    of the Ricci flow on complete noncompact manifolds.\/}
J. Differential Geom. {\bf 74} (2006) 119--154.

\bibitem{Cho08} {\sc B. Chow}, {\em The {R}icci {F}low: {T}echniques and
    {A}pplications: {P}art {II}: {A}nalytic {A}spects.\/}
  Volume {\bf 144} of {\em Mathematical Surveys and Monographs}.
  \newblock American Mathematical Society, 2008.

\bibitem{deturck} \textsc{D. DeTurck}, \emph{Deforming metrics in the direction
of their Ricci tensors.\/} In `Collected papers on Ricci flow.' 
Edited by H. D. Cao, B. Chow, S. C. Chu and S. T. Yau. 
Series in Geometry and Topology, {\bf 37}.
International Press, 2003.

\bibitem{ham3D} \textsc{R. S. Hamilton}, \emph{Three-manifolds with positive
Ricci curvature.\/} J. Differential Geom. {\bf 17} (1982) 255--306.

\bibitem{Omo66} \textsc{H. Omori}, \emph{Isometric immersions of Riemannian
    manifolds.\/} J. Math. Soc. Japan {\bf 19}:2 (1967), 205--214.

\bibitem{shi} \textsc{W.-X. Shi}, \emph{Deforming the metric on complete 
Riemannian manifolds.\/} 
J. Differential Geom. {\bf 30} (1989), 223--301.

\bibitem{revbub} \textsc{P. M. Topping}, \emph{Reverse bubbling and 
nonuniqueness in the harmonic map flow.\/}
I.M.R.N. 
{\bf 10} 
(2002) 505--520.

\bibitem{RFnotes} \textsc{P. M. Topping}, \emph{Lectures on the Ricci flow.\/}
L.M.S. Lecture notes series {\bf 325} C.U.P. (2006)
\url{http://www.warwick.ac.uk/~maseq/RFnotes.html}

\bibitem{Top} \textsc{P. M. Topping}, \emph{Ricci flow compactness 
via pseudolocality, and flows with incomplete initial metrics.\/}
J. Eur. Math. Soc. (JEMS) to appear.

\bibitem{Yau73} \textsc{S.-T. Yau}, \emph{Remarks on
conformal transformations.\/} J. Differential Geometry
{\bf 8} (1973), 369--381.

\bibitem{Yau75} \textsc{S.-T. Yau}, \emph{Harmonic Functions on Complete
    Riemannian Manifolds.\/}
  Comm. Pure and Appl. Math. {\bf 28} (1975), 201--228.

\end{thebibliography}
\end{document}